\documentclass[11pt]{article}
\usepackage{amsmath, latexsym, amsfonts, amssymb, amsthm, amscd}
\usepackage{mathtools} 
\usepackage[left=2.2cm, right=2.2cm, top=2.5cm, bottom=2.5cm]{geometry}
\usepackage{upquote} 
\usepackage{color}
\usepackage{setspace} 
\onehalfspacing
\usepackage[labelfont=bf]{caption} 
\usepackage{stmaryrd} 
\usepackage[title]{appendix}
\usepackage{enumitem} 

\usepackage{kpfonts} 
\usepackage{dsfont} 

\usepackage[english]{babel} 

\usepackage[utf8]{inputenc}
\usepackage[T1]{fontenc}

\usepackage{bm} 


\usepackage{xcolor}
\definecolor{Maroon}{HTML}{ad2231}
\definecolor{webgreen}{HTML}{008000}
\usepackage{hyperref}
\hypersetup{colorlinks, breaklinks, urlcolor=Maroon, linkcolor=Maroon, citecolor=webgreen} 

\allowdisplaybreaks
\makeatletter
\makeatother


\usepackage{tikz}
\usetikzlibrary{calc}
\usepackage{pgfplots}

\newtheorem{theorem}{Theorem}[section]
\newtheorem{corollary}[theorem]{Corollary}
\newtheorem{proposition}[theorem]{Proposition}
\newtheorem{lemma}[theorem]{Lemma}

\newtheorem{definition}[theorem]{Definition}

\theoremstyle{definition}

\numberwithin{equation}{section}

\usepackage{chngcntr}
\usepackage{apptools}
\AtAppendix{\counterwithin{lemma}{section}}

\theoremstyle{definition}

\newtheorem*{general*}{A general remark}
\newtheorem*{assumption*}{Assumption}

\begin{document}
\title{Convergence of the pruning processes of stable Galton-Watson trees}
\author{Gabriel Berzunza Ojeda\footnote{ {\sc Department of Mathematical Sciences, University of Liverpool. Liverpool, United Kingdom.} E-mail: gabriel.berzunza-ojeda@liverpool.ac.uk}\,\,
    and
   Anita Winter\footnote{ {\sc Fakult\"at f\"ur Mathematik, Universit\"at Duisburg-Essen. Essen, Germany.} E-mail: anita.winter@uni-due.de} \\ \vspace*{10mm}
}
\maketitle

\vspace{0.1in}

\begin{abstract}
Since the work of Aldous and Pitman (1998) \cite{AldousPitman1998b}, several authors have studied the pruning processes of Galton-Watson trees and their continuous analogue L\'evy trees. In \cite{LohrVoisinWinter2015}, L\"ohr, Voisin and Winter (2015) introduced the space of bi-measure $\mathbb{R}$-trees equipped with the so-called {\em leaf sampling weak vague topology} which allows them to unify the discrete and the continuous picture by considering them as instances of the same Feller-continuous Markov process with different initial conditions. Moreover, the authors show that these so-called pruning processes converge in the Skorokhod space of c\`adl\`ag paths with values in the space of bi-measure $\mathbb{R}$-trees, whenever the initial bi-measure $\mathbb{R}$-trees converge.

In this paper we provide an application to the above principle by verifying that a sequence of suitably rescaled critical conditioned Galton-Watson trees whose offspring distributions lie in the domain of attraction of a stable law of index $\alpha \in (1,2]$ converge to the $\alpha$-stable L\'evy-tree in the leaf-sampling weak vague topology.  
\end{abstract}

\noindent {\sc Key words and phrases}: Galton-Watson trees; Gromov-weak topology; pruning procedure;  real trees; stable L\'evy tree; tree-valued process.

\noindent {\sc Subject Classes}: 60F05, 05C05, 05C10


\section{Introduction} \label{Sec1}

Aldous and Pitman \cite{AldousPitman1998b} initiated the study of a tree-valued process obtained by performing dynamical bond-percolation on Galton-Watson trees (GW-trees). They considered a GW-tree $\mathbf{t}_{1}$, and for $u \in [0,1]$, they let $\mathbf{t}_{u}$ be the subtree of $\mathbf{t}_{1}$ containing its root, obtained by removing (or pruning) each edge with probability $1-u$. Aldous and Pitman \cite{AldousPitman1998b} showed that one can couple the above dynamics for several $u \in [0,1]$ in such a way that $\mathbf{t}_{u^{\prime}}$ is a rooted subtree of $\mathbf{t}_{u}$ for $u^{\prime} \leq u$. This pruning procedure on the edges of a GW-tree gives rise to a non-decreasing tree-valued Markov process that they called the pruning process.

The $\alpha$-stable L\'evy tree $\mathcal{T}_{\alpha} = (\mathcal{T}_{\alpha}, r_{\alpha}, \rho_{\alpha}, \mu_{\alpha})$, for $\alpha \in (1,2]$, is a rooted measure $\mathbb{R}$-tree that arises as the scaling limit of $\alpha$-stable ${\rm GW}$-trees (i.e.\ critical GW-trees whose offspring distributions lie in the domain of attraction of an $\alpha$-stable law, conditioned to have a fixed total progeny $N \in \mathbb{N}$). More precisely, let $(\mathbf{t}_{N}, r_{N}^{\text{gr}}, \rho_{N}, \mu_{N}^{\rm nod})$ be an $\alpha$-stable ${\rm GW}$-tree, viewed as a rooted metric measure tree, i.e.\ $\mathbf{t}_{N}$ is identified as its set of $N$ vertices, $r_{N}^{\text{gr}}$ is the graph-distance on $\mathbf{t}_{N}$, $\rho_{N}$ is its root and $\mu_{N}^{\rm nod}$ is the uniform measure on the set of vertices of $\mathbf{t}_{N}$. In particular, by well-known theorems of Aldous \cite{Al1993} and Duquesne \cite{Du2003}, there exists a slowly varying function $\ell: \mathbb{R}_{+} \rightarrow \mathbb{R}_{+}$ such that, for $a_{N} \coloneqq \ell(N) N^{\frac{1}{\alpha} - 1}$,
\begin{equation} \label{eq1}
(\mathbf{t}_{N}, a_{N} \cdot r_{N}^{\text{gr}}, \rho_{N}, \mu_{N}^{\rm nod})\rightarrow (\mathcal{T}_{\alpha}, r_{\alpha}, \rho_{\alpha}, \mu_{\alpha}), \quad \text{in distribution}, \quad \text{as} \quad N \rightarrow \infty,
\end{equation}
\noindent on the space of (equivalence classes) of rooted metric measure spaces equipped with the convergence of the {\em Gromov-Hausdorff-weak topology} (see \cite{AthreyaLoehrWinter2016}).
The Brownian CRT corresponds to $\alpha = 2$, and in particular, if the offspring distribution of $\mathbf{t}_{N}$ has finite variance $\sigma^{2}$ then one could take $\ell \equiv \sigma$.

This result motivated several authors to consider analogues cutting-down procedures and thus pruning processes, of $\alpha$-stable L\'evy trees (and more generally L\'evy trees). For example, Aldous and Pitman \cite{Aldous1998a}, Abraham and Serlet \cite{AS2002} studied the cutting-down of the Brownian CRT along its skeleton according to a Poisson point process of cuts with intensity ${\rm d} t \otimes \lambda_{2}$ on $[0, \infty) \times \mathcal{T}_{2}$, where $\lambda_{2}$ is the so-called length measure (this corresponds to bond-percolation in the discrete setting). In a similar spirit, Miermont \cite{Miermont2005} studied the cutting-down of the $\alpha$-stable L\'evy tree along its branching points in a Poisson manner. Miermont \cite{Miermont2005}, Abraham and Delmas \cite{AD2008} also studied the same pruning procedure in L\'evy trees without a Brownian component. More generally, \cite{ADV2010, AD2012} studied a mixture of the two previous types of cutting-down procedures on L\'evy trees. The discrete analogue of pruning a L\'evy tree along its branching points was described by Abraham, Delmas, and He \cite{ADHe2012}. In \cite{ADHe2012}, the authors considered dynamic site-percolation on GW-trees where each branching vertex of the initial tree is removed (or cut) independently with probability $1 - u^{n-1}$, for some $u \in [0,1]$, where $n$ represents the number of children of the vertex.

In \cite{LohrVoisinWinter2015}, L\"ohr, Voisin and Winter provided a new state space together with a new topology which allows them to unify the discrete and the continuous picture by considering them as instances of the same Feller-continuous Markov process, THE pruning process, with different initial conditions. They constructed this process on the space of so-called bi-measure $\mathbb{R}$-trees (i.e., rooted measure $\mathbb{R}$-trees $(\mathcal{T}, r, \rho, \mu)$ with an additional so-called pruning measure) equipped with the leaf-sampling weak vague topology (LWV-topology); see Definition \ref{Def1}. Moreover, they characterized this process analytically via its Markovian generator.

L\"ohr, Voisin and Winter \cite{LohrVoisinWinter2015} also established convergence of sequences of pruning processes, in the Skorokhod space of c\`adl\`ag paths with values in the space of bi-measure $\mathbb{R}$-tree, whenever the initial bi-measure $\mathbb{R}$-trees converge for the LWV-topology. In particular, and as a consequence of the convergence in (\ref{eq1}), the authors in \cite[Section 4]{LohrVoisinWinter2015} showed that the pruning process of $\alpha$-stable L\'evy trees, with the pruning measure being their underlying length measure, is the scaling limit of dynamic bond-percolation on $\alpha$-stable GW-trees (i.e.\ Aldous and Pitman \cite{AldousPitman1998b} pruning process). Thus, it is natural to expect that the more general pruning process of $\alpha$-stable L\'evy trees \cite{AD2012} (i.e.\ when the pruning measure is a mixture of its length measure and a weighted measure on its branching points) is also the scaling limit of the dynamic mixture of bond and site percolation on $\alpha$-stable GW-trees \cite{AldousPitman1998b, ADHe2012}. 

This question posed in \cite[Example 4.6]{LohrVoisinWinter2015} remained unresolved. This work's primary contribution, Theorem \ref{NewTheo} presented below, provides a positive answer to this question. Although the result itself could be anticipated, the significance of our work is primarily technical; see further discussion at the end of Section \ref{Sec4}. 

We should note that He and Winkel \cite{HeW2014} (unfortunately unpublished) addressed this problem in a different topology, providing an answer even for general GW-trees and L\'evy trees. To be more precise, He and Winkel \cite{HeW2014} established the convergence in the Skorokhod space of c\`adl\`ag rooted locally compact $\mathbb{R}$-tree-valued functions equipped with the (localised) Gromov-Hausdorff topology. For a further discussion and the differences between the two topologies, we refer to \cite[Section 2.3]{HeW2014}.

In the next section, we define the space of bi-measure $\mathbb{R}$-trees and the LWV-topology. In Section~\ref{Sec3}, we define THE pruning process. This will enable us to formally state our main result in Section~\ref{Sec4}.

\subsection{Bi-measure $\mathbb{R}$-trees and the LWV-topology} \label{Sec2}

A rooted metric measure space is a quadruple $(\mathcal{X}, r, \rho, \mu)$,  where $(\mathcal{X}, r)$ is a metric space such that $(\text{supp}(\mu), r)$ is complete and separable, the so-called sampling measure $\mu$ is a finite measure on the Borel $\sigma$-algebra of $(\mathcal{X}, r)$ and $\rho \in \mathcal{X}$ is a distinguished point which is referred to as the root; the support $\text{supp}(\mu)$ of $\mu$ is defined as the smallest closet set $\mathcal{X}_{0} \subseteq \mathcal{X}$ such that $\mu(\mathcal{X}_{0}) = \mu(\mathcal{X})$. Recall that a rooted metric measure space is a ``tree-like'' metric space, or formally a $0$-hyperbolic space, iff satisfies the so-called $4$-point condition. Recall also that a $0$-hyperbolic space is called an $\mathbb{R}$-tree if it is connected; see \cite{And1996} for equivalent definitions and background on $\mathbb{R}$-trees.

For a (rooted measure) $\mathbb{R}$-tree $\mathcal{T} = (\mathcal{T},r,\rho, \mu)$, we denote the unique path between two points $u,v \in \mathcal{T}$ by $[u,v]$, and $[u,v[ \coloneqq [u,v] \setminus \{v\}$. For $n \in \mathbb{N}$ and $\mathbf{v}_{n} = (v_{1}, \dots, v_{n}) \in \mathcal{T}^{n}$, we use the notation $\llbracket \mathbf{v}_{n} \rrbracket$ for the tree spanned by the root $\rho$ and the vector $\mathbf{v}_{n}$, i.e., $\llbracket \mathbf{v}_{n} \rrbracket \coloneqq \bigcup_{i=1}^{n} [\rho, v_{i}]$. We write
\begin{equation} \label{e:117}
   \mathbf{Sk}_{\mu}(\mathcal{T}) \coloneqq \bigcup_{v \in \text{supp}(\mu)} [\rho, v[ \, \cup \, \{v \in \mathcal{T}: \mu(\{v \}) > 0 \},
\end{equation}
for the $\mu$-skeleton and
\begin{equation} \label{e:118}
  \mathbf{Lf}_{\mu}(\mathcal{T})  \coloneqq \llbracket \text{supp} (\mu) \rrbracket \setminus \mathbf{Sk}_{\mu}(\mathcal{T}),
\end{equation}
the set of $\mu$-leaves of $\mathcal{T}$.

\begin{definition}[{{\bf Bi-measure $\mathbb{R}$-trees} \cite[Section~2.3]{LohrVoisinWinter2015}}] \label{Def1}
We call $\mathcal{T} = (\mathcal{T}, r, \rho, \mu, \nu)$ a (rooted) bi-measure $\mathbb{R}$-tree if $(\mathcal{T}, r, \rho, \mu)$ is a $\mathbb{R}$-tree and $\nu$ is a $\sigma$-finite measure on $(\mathcal{T}, r)$ such that $\nu([\rho, u])$ is $\mu$-almost everywhere finite for $u \in \mathcal{T}$, and $\nu$ vanishes on the set of $\mu$-leaves, i.e., $\nu( \mathbf{Lf}_{\mu}(\mathcal{T})) = 0$.

Two bi-measure $\mathbb{R}$-trees $(\mathcal{T}, r, \rho, \mu, \nu)$ and $(\mathcal{T}^{\prime}, r^{\prime}, \rho^{\prime}, \mu^{\prime}, \nu^{\prime})$ are called equivalent if there exists a root preserving isometry $\phi: \llbracket {\rm supp}(\mu) \rrbracket  \rightarrow \mathcal{T}^{\prime}$ such that $\mu$ and $\nu$ are preserving in the $\mu$-skeleton. We denote by $\mathbb{H}_{f,  \sigma}$ the space of equivalence classes of (rooted) bi-measure $\mathbb{R}$-trees.
\end{definition}

In many interesting cases, the additional measure $\nu$ is not locally finite. For example, the length measure $\lambda_{\mathcal{T}}$ on an $\mathbb{R}$-tree $(\mathcal{T}, r, \rho, \mu)$ given by
\begin{eqnarray}
\lambda_{\mathcal{T}}([u,v]) = r(u,v), \hspace*{2mm} \text{for} \hspace*{2mm} u,v \in \mathcal{T}, \hspace*{2mm} \text{and} \hspace*{2mm} \lambda_{\mathcal{T}}(\mathbf{Lf}(T)) = 0,
\end{eqnarray}

\noindent where $\mathbf{Lf}(\mathcal{T}) \coloneqq \mathcal{T} \setminus \bigcup_{v \in \mathcal{T}} [\rho, v[$ denotes the set of leaves of $\mathcal{T}$, satisfies the properties of the measure $\nu$ in Definition \ref{Def1}. In particular, it is well-known that the length measure $\lambda_{\alpha}$ of the $\alpha$-stable L\'evy tree $\mathcal{T}_{\alpha}$ is not locally finite.

For $n \in \mathbb{N}$, a bi-measure $\mathbb{R}$-trees $(\mathcal{T}, r, \rho, \mu, \nu)$ and a (finite) sequence of points $\mathbf{u}_{n} = (u_{1}, \dots, u_{n}) \in \mathcal{T}^{n}$, we let $(\llbracket \mathbf{u}_{n}  \rrbracket, r, \rho, \mathbf{u}_{n}, \nu)$ be the $n$-pointed $\mathbb{R}$-tree spanned by the root $\rho$ and the points $\mathbf{u}_{n}$ equipped with $r$ and $\nu$, which we tacitly understand to be restricted to the appropriate space, i.e., $\llbracket \mathbf{u}_{n}  \rrbracket$.

\begin{definition}[{\bf LWV-topology}]  \label{Def2}
A sequence $(\mathcal{T}_{N})_{N \in \mathbb{N}}$ of bi-measure $\mathbb{R}$-trees converges to a bi-measure $\mathbb{R}$-tree $\mathcal{T}$ in $\mathbb{H}_{f, \sigma}$ in the leaf-sampling weak vague topology (LWV-topology) if $\mu(\mathcal{T}_{N}) \rightarrow \mu(\mathcal{T})$, and for any $n \in \mathbb{N}$,
\begin{equation}
(\llbracket \mathbf{u}_{n,N}  \rrbracket, r_{N}, \rho_{N}, \mathbf{u}_{n,N}, \nu_{N}) \rightarrow (\llbracket \mathbf{u}_{n}  \rrbracket, r, \rho, \mathbf{u}_{n}, \nu), \quad \text{in distribution}, \quad \text{as} \quad N \rightarrow \infty,
\end{equation}
\noindent with respect to the ($n$-pointed) Gromov-weak topology, where $\mathbf{u}_{n,N} = (u_{1,N}, \dots, u_{n,N})$ is a sequence of $n$ i.i.d.\ points with common distribution $\mu_{N}(\cdot)/\mu_{N}(\mathcal{T}_{N})$. Similarly, $\mathbf{u}_{n} = (u_{1}, \dots, u_{n})$ is a sequence of $n$ i.i.d.\ points with common distribution $\mu(\cdot)/\mu(\mathcal{T})$.
\end{definition}

Note that our definition of the LWV-topology is equivalent to that in \cite[Definition~2.14]{LohrVoisinWinter2015} due to \cite[Proposition~2.17]{LohrVoisinWinter2015}. This alternative characterization of convergence will be useful in our proofs. The space $\mathbb{H}_{f,\sigma}$ equipped with the LWV-topology is separable and metrizable; see \cite[Corollary~2.19]{LohrVoisinWinter2015}. For background and the definition of the ($n$-pointed) Gromov-weak topology, we refer to Section \ref{Sec5}. 

\subsection{The pruning process}  \label{Sec3}

Consider a bi-measure $\mathbb{R}$-tree $\mathcal{T} =(\mathcal{T}, r, \rho, \mu, \nu) \in \mathbb{H}_{f,  \sigma}$ and a subset $\pi \subseteq \mathbb{R}_{+} \times \mathcal{T}$. Although $\pi$ is associated with a particular class representative, it corresponds, of course, to a similar set for any representative of the same equivalence class by mapping across using the appropriate root invariant isometry. Define the set of cut points up to time $t \geq 0$ by $\pi_{t} \coloneqq \{v \in \mathcal{T}: \exists \, \,  s \leq t \, \, \text{such that} \, \, (s,v) \in \pi \}$. For every $v \in \mathcal{T}$, the tree pruned at $v$ is defined by
\begin{equation*}
\mathcal{T}^{v} \coloneqq \{w \in \mathcal{T}: v \notin [\rho, w] \}.
\end{equation*}
Then the pruned tree at the set $\pi_{t} \subseteq \mathcal{T}$, it is the intersection of the trees $T^{v}$ pruned at $v \in \pi_{t}$,
\begin{equation*}
\mathcal{T}^{\pi_{t}} \coloneqq \bigcap_{v \in \pi_{t}} \mathcal{T}^{v}.
\end{equation*}
We naturally equip the pruned tree $\mathcal{T}^{\pi_{t}}$ with the restrictions of the metric $r$ and the measures $\mu$ and $\nu$, i.e.\ we write $\mathcal{T}^{\pi_{t}} = (\mathcal{T}^{\pi_{t}}, r, \rho, \mu, \nu) = (\mathcal{T}^{\pi_{t}}, r \upharpoonright_{\mathcal{T}^{\pi_{t}} \times \mathcal{T}^{\pi_{t}}}, \rho, \mu \upharpoonright_{\mathcal{T}^{\pi_{t}}}, \nu \upharpoonright_{\mathcal{T}^{\pi_{t}}})$ that is an element of $\mathbb{H}_{f, \sigma}$.

\begin{definition}[{{\bf THE pruning process} \cite[Definition~3.2]{LohrVoisinWinter2015}}]
Fix a bi-measure $\mathbb{R}$-tree $\mathcal{T} = (\mathcal{T}, r, \rho, \mu, \nu) \in \mathbb{H}^{f,  \sigma}$. Let $\pi=\pi^{\mathcal{T}}$ be a Poisson point measure on $\mathbb{R}_{+} \times \mathcal{T}$ with intensity measure ${\rm d} t \otimes \nu$, where ${\rm d} t $ is the Lebesgue measure on $\mathbb{R}_{+}$. We define the pruning process, $Y^{\mathcal{T}} \coloneqq (Y^{\mathcal{T}}(t), t \geq 0)$, as the bi-measure $\mathbb{R}$-tree-valued process obtained by pruning $Y^{\mathcal{T}}(0) \coloneqq (\mathcal{T}, r, \rho, \mu, \nu)$ at the points of the Poisson point process $\pi_{t}(\cdot) = \pi_{t}^{\mathcal{T}}(\cdot) \coloneqq \pi^{\mathcal{T}}([0,t] \times \cdot)$, i.e.,
\begin{equation*}
Y^{\mathcal{T}}(t) \coloneqq (\mathcal{T}^{\pi_{t}}, r, \rho, \mu, \nu).
\end{equation*}
We call $\nu$ the pruning measure of $Y^{\mathcal{T}}$.
 \label{Def3}
\end{definition}

Informally, since the pruning measure is already part of the state space, the pruning process can be described as a pure jump process which, given a bi-measure $\mathbb{R}$-tree $(\mathcal{T}, d, \rho, \mu, \nu)$, lets rain down successively more and more cut points according to a Poisson process whose intensity measure is equal to the pruning measure. At each cut point, the subtree above is cut off and removed, and the sampling measure $\mu$ and pruning measure $\nu$ are simultaneously updated by simply restricting them to the remaining, pruned part of the $\mathbb{R}$-tree. By  \cite[Lemma 3.1, Lemma 3.3 and Proposition 3.4]{LohrVoisinWinter2015}, the pruning process is a Feller-continuous strong Markov $\mathbb{H}_{f,  \sigma}$-valued process  (equipped with the LWV-topology) with c\`adl\`ag paths.

\subsection{Main results}  \label{Sec4}

In this section, we present our main result. Specifically, we establish the convergence of the pruning process of $\alpha$-stable ${\rm GW}$-trees (i.e.\ dynamic percolation) to the pruning process of $\alpha$-stable L\'evy trees. We begin by posing the problem in the context of $\mathbb{R}$-trees and bi-measure $\mathbb{R}$-trees.

Recall that an $\alpha$-stable ${\rm GW}$-tree is a critical GW-tree whose offspring distributions lie in the domain of attraction of an $\alpha$-stable law, conditioned to have a fixed total progeny $N \in \mathbb{N}$. Specifically,
the offspring distribution $\eta = (\eta(k), k \geq 0)$ is a probability distribution on $\{0, 1, \dots \}$ satisfying $\sum_{k = 0}^{\infty} k \eta(k) = 1$. We always implicitly assume that $\eta(0) >0$, $\eta(0) + \eta(1) < 1$ to avoid degenerate cases, and that $\eta$ is aperiodic (we introduce this last condition to avoid unnecessary complications). We say that  $\eta $ is in the domain of attraction of a stable law of index $\alpha \in (1,2]$ if either the variance of $\eta$ is finite, or $\eta ([k, \infty)) = k^{-\alpha} L(k)$, where $L: \mathbb{R}_{+} \rightarrow \mathbb{R}_{+}$ is a function such that $L(x) >0$ for $x$ large enough and $\lim_{x \rightarrow \infty} L(tx)/L(x) = 1$ for all $t >0$ (such function is called slowly varying function).

Let $(\mathbf{t}_{N}, r_{N}^{\text{gr}}, \rho_{N}, \mu_{N}^{\rm nod})$ be an $\alpha$-stable ${\rm GW}$-tree, viewed as a (rooted measure) $\mathbb{R}$-tree with unit length edges (equipped with the graph-distance and the uniform measure on its set of vertices). Let $\ell: \mathbb{R}_{+} \rightarrow \mathbb{R}_{+}$ be a slowly varying function for which the convergence in (\ref{eq1}) holds. We define two sequences of positive real numbers $(a_{N})_{N \in \mathbb{N}}$ and $(b_{N})_{N \in \mathbb{N}}$ by letting
\begin{eqnarray} \label{eq23}
a_{N} \coloneqq \ell(N)N^{\frac{1}{\alpha} -1} \hspace*{3mm} \text{and} \hspace*{3mm} b_{N} \coloneqq \ell(N)^{-1}N^{-\frac{1}{\alpha}}, \hspace*{4mm} \text{for} \hspace*{2mm} N \in \mathbb{N}.
\end{eqnarray}

\noindent As in the introduction, consider the rescaled $\alpha$-stable ${\rm GW}$-tree $a_{N} \cdot \mathbf{t}_{N} = (\mathbf{t}_{N}, a_{N} \cdot r_{N}^{\text{gr}}, \rho_{N}, \mu_{N}^{\rm nod})$.  Next, we introduce two different pruning measures on $a_{N} \cdot \mathbf{t}_{N}$, 
\begin{eqnarray} \label{eq7}
\nu^{\text{ske}}_{N} \coloneqq a_{N} \cdot \lambda_{N}, \hspace*{3mm} \text{and} \hspace*{3mm} \nu_{N}^{\rm bra} (v) \coloneqq b_{N} \cdot (c(v) -1) \mathbf{1}_{\{ c(v) \geq 1 \}}, \hspace*{4mm} \text{for} \hspace*{2mm} v \in \mathbf{t}_{N},
\end{eqnarray}

\noindent where $\lambda_{N}$ is the length measure on $(\mathbf{t}_{N}, r_{N}^{\text{gr}}, \rho_{N}, \mu_{N}^{\rm nod})$ and $c(v)$ denotes the number of children of a vertex $v$ in $\mathbf{t}_{N}$. We recall that a vertex $v \in \mathbf{t}_{N}$ with $c(v) \geq 1$ is called a branching-point. Note also that $\nu^{\text{ske}}_{N}$ is indeed the length measure on $a_{N} \cdot \mathbf{t}_{N}$. The first pruning measure corresponds to cutting the edges (i.e.\ bond-percolation) as in \cite{AldousPitman1998b}, while the second one corresponds to cutting the vertices (i.e.\ site-percolation) as in \cite{ADHe2012}. We also consider a mixture of the previous pruning measures, i.e.\ $\nu_{N}^{\rm mix} \coloneqq \nu^{\text{ske}}_{N} + \nu_{N}^{\rm bra}$.

Let $\mathcal{T}_{\alpha} = (\mathcal{T}_{\alpha}, r_{\alpha}, \rho_{\alpha}, \mu_{\alpha})$ be the $\alpha$-stable L\'evy tree of index $\alpha \in (1,2]$. Recall that $\mathcal{T}_{\alpha}$ is a compact $\mathbb{R}$-tree and $\mu_{\alpha}$ is the uniform probability measure supported on its set of leaves; see \cite{Du2005}. Recall also that the multiplicity of a point $x \in \mathcal{T}_{\alpha}$ is the number of connected components of $\mathcal{T}_{\alpha} \setminus \{x\}$ and that the degree of a point is defined as its multiplicity minus one; points with degree $0$ are called leaves, and those with degree at least $2$ are called branching-points. Indeed, by \cite[Theorem 4.6]{Du2005}, the degree of every point in $\mathcal{T}_{\alpha}$ belongs to $\{0,2, \infty \}$. In particular, for $\alpha = 2$ (i.e.\ the Brownian CRT case), the branching-points have exactly degree $2$. For $\alpha \in (1,2)$, Duquesne and Le Gall \cite[Theorem 4.6]{Du2005} have shown that a.s.\ the branching-points of $\mathcal{T}_{\alpha}$ have infinite degree and that they form a countable set, say $\mathcal{B}_{\alpha}$. Moreover, for any $x \in \mathcal{B}_{\alpha}$, one can define the local time, or width of $x$ as the almost sure limit
\begin{eqnarray} \label{eq18}
\Delta_{x} = \lim_{\varepsilon \rightarrow 0^{+}}  \frac{1}{\varepsilon} \, \, \mu_{\alpha} (\left \{ y \in \mathcal{T}_{\alpha}: \, \, x \in [\rho_{\alpha}, y],  \, \, r_{\alpha}(x,y) < \varepsilon \right \}).
\end{eqnarray}

\noindent The existence of this quantity is justified in \cite[Proposition 2]{Miermont2005}.

We then consider two different pruning measures in $\mathcal{T}_{\alpha}$. The length measure $\nu_{\alpha}^{\rm ske} \coloneqq \lambda_{\alpha}$ on $\mathcal{T}_{\alpha}$ which corresponds to pruning proportional to the length of its skeleton as in \cite{Aldous1998a, AS2002}. For $\alpha \in (1,2)$, the $\sigma$-finite measure,
\begin{eqnarray}
\nu^{{\rm bra}}_{\alpha} (\{x\}) \coloneqq \Delta_{x}, \hspace*{4mm}  \text{for} \hspace*{2mm} x \in \mathcal{B}_{\alpha},
\end{eqnarray}

\noindent which corresponds to pruning on the infinite branching points of $\mathcal{T}_{\alpha}$ as in \cite{Miermont2005, AD2008}; \cite[Lemma 4]{Miermont2005} guarantees that a.s.\ $\nu^{{\rm bra}}$ satisfies the properties of the measure $\nu$ in Definition \ref{Def1}. We also consider a mixture of the previous pruning measures, i.e., $\nu^{{\rm mix}}_{\alpha} \coloneqq  \nu_{\alpha}^{\rm ske} + \nu_{\alpha}^{\rm bra} \mathbf{1}_{\{\alpha \in (1,2)\}} + \nu_{2}^{\rm ske} \mathbf{1}_{\{\alpha =2\}}$, where $ \nu_{2}^{\rm ske}$ is the length measure on the Brownian CRT. This last pruning measure corresponds to pruning proportional to the length and on the infinite branching points of $\mathcal{T}_{\alpha}$ as in \cite{ADV2010, AD2012}.

We have now all the ingredients to state our main result. We write $\mathbb{D}([0,\infty), \mathbb{H}_{f, \sigma})$ for the space of $\mathbb{H}_{f, \sigma}$-valued c\`adl\`ag functions equipped with the Skorohod ${\rm J}_{1}$ topology. The space $\mathbb{H}_{f, \sigma}$ is equipped with the LWV-topology.

\begin{theorem} \label{NewTheo}
For $N \in \mathbb{N}$ and $\ast \in \{ {\rm ske}, {\rm bra}, {\rm mix}\}$, let $\mathbf{t}_{N}^{\ast} = (\mathbf{t}_{N}, a_{N} \cdot r_{N}^{{\rm gr}}, \rho_{N}, \mu_{N}^{\rm nod}, \nu_{N}^{\ast})$ be the bi-measure $\mathbb{R}$-tree associated with an $\alpha$-stable ${\rm GW}$-tree. Similarly, let $\mathcal{T}_{\alpha}^{\ast} = (\mathcal{T}_{\alpha}, r_{\alpha}, \rho_{\alpha}, \mu_{\alpha}, \nu^{\ast}_{\alpha})$ be the bi-measure $\mathbb{R}$-tree associated with an $\alpha$-stable L\'evy tree. For $\ast \in \{ {\rm ske}, {\rm bra}, {\rm mix}\}$, we have that, in distribution,
\begin{eqnarray}
Y^{\mathbf{t}_{N}^{\ast}} \xRightarrow[]{{\rm Sk}}  Y^{\mathcal{T}_{\alpha}^{\ast}}, \quad \text{as}  \quad N \rightarrow \infty, \quad \text{in the space} \quad \mathbb{D}([0,\infty), \mathbb{H}_{f, \sigma}). 
\end{eqnarray} 
\end{theorem}

Let $\mathcal{T} = (\mathcal{T}, r, \rho, \mu, \nu)$ be a bi-measure $\mathbb{R}$-tree and define the pruning process $Z^{\mathcal{T}} \coloneqq (Z^{\mathcal{T}}(t), t \geq 0)$, as a $\mathbb{R}$-tree-valued process, by letting $Z^{\mathcal{T}}(0) \coloneqq (\mathcal{T}, r, \rho, \mu)$ and
\begin{equation} 
 Z^{\mathcal{T}}(t) \coloneqq (\mathcal{T}^{\pi_{t}}, r, \rho, \mu) = (\mathcal{T}^{\pi_{t}}, r \upharpoonright_{\mathcal{T}^{\pi_{t}} \times \mathcal{T}^{\pi_{t}}}, \rho, \mu \upharpoonright_{\mathcal{T}^{\pi_{t}}}),
\end{equation}
\noindent for $t \geq 0$ (i.e.\ $Z^{\mathcal{T}}$ is the pruning process $Y^{\mathcal{T}}$ where we do not keep track of the pruning measure). Let $\mathbb{H}_{0}$ be the space of (rooted) rooted metric measure spaces (see Definition \ref{Def4} below) equipped with the Gromov-weak topology. The following corollary is a consequence of Theorem \ref{NewTheo}. We write $\mathbb{D}([0,\infty), \mathbb{H}_{0})$ for the space of $\mathbb{H}_{0}$-valued c\`adl\`ag functions equipped with the Skorohod ${\rm J}_{1}$ topology.

\begin{corollary} \label{Cor1}
For $\ast \in \{ {\rm ske}, {\rm bra}, {\rm mix}\}$, we have that, in distribution,
\begin{eqnarray}
Z^{\mathbf{t}_{N}^{\ast}} \xRightarrow[]{{\rm Sk}}  Z^{\mathcal{T}_{\alpha}^{\ast}}, \quad \text{as}  \quad N \rightarrow \infty, \quad \text{in the space} \quad \mathbb{D}([0,\infty), \mathbb{H}_{0}).
\end{eqnarray}
\end{corollary}

\begin{proof}
Since LWV-convergence implies Gromov-weak convergence (\cite[Remark 2.16]{LohrVoisinWinter2015}), our claim follows from Theorem \ref{NewTheo}.
\end{proof}

The pruning process $Z^{\mathbf{t}_{N}^{\rm ske}}$ is, up to the time transformation $u = e^{-ta_{N}}$, the same as the pruning process uniformly on the edges of Aldous and Pitman \cite{AldousPitman1998b}. The pruning process $Z^{\mathcal{T}_{\alpha}^{\rm ske}}$ is precisely the one considered by Aldous and Pitman \cite{Aldous1998a} and by Abraham and Serlet \cite{AS2002}. Note also that $Z^{\mathbf{t}_{N}^{\rm bra}}$ is, up to the time transformation $u = e^{-tb_{N}}$, the same as the pruning process in the branching points of an $\alpha$-stable ${\rm GW}$-tree in \cite{ADHe2012}. Moreover, $Z^{\mathcal{T}_{\alpha}^{\rm bra}}$ is the one considered by Miermont \cite{Miermont2005} and by Abraham and Delmas \cite{AD2008} in the stable case. The pruning process $Z^{\mathcal{T}_{\alpha}^{\rm mix}}$ corresponds to that in \cite{ADV2010, AD2012} for the $\alpha$-stable L\'evy tree. On the other hand,  L\"ohr, Voisin and Winter \cite[Section 4]{LohrVoisinWinter2015} already showed that the sequence $(Y^{\mathbf{t}_{N}^{\rm ske}})_{N \in \mathbb{N}}$ converges to $Y^{\mathcal{T}_{\alpha}^{\rm ske}}$. 

As mentioned in the introduction, Theorem \ref{NewTheo} positively answers the question posed in \cite{LohrVoisinWinter2015}. While the result itself could be anticipated, the primary contribution of our work lies in its technical approach. The key ingredients are Theorem \ref{Theo4} and Lemma \ref{lemma7}, which provide an estimation of the degree distribution along specific branches of the $\alpha$-stable GW-tree. Informally, this corresponds to the value of the pruning measure $\nu_{N}^{\rm bra}$ for finite intervals. The proofs of both Theorem \ref{Theo4} and Lemma \ref{lemma7} are quite technical, with the main difficulties relying on establishing continuity properties of certain functionals in the Skorokhod ${\rm J}_{1}$ topology (see Appendix \ref{Append1}). These properties might be of independent interest. Furthermore, the proof of Theorem \ref{NewTheo} introduces a new approach to establishing convergence between bi-measure $\mathbb{R}$-trees with respect to the LWV-topology using coding functions, a method that differs from the applications found in \cite{LohrVoisinWinter2015}.

The remainder of the article is organised as follows. In Section \ref{Sec5}, we recall the Gromov-weak topology on the space of rooted metric measure spaces. In Section~\ref{Sec10}, we recall some relevant properties of Galton-Watson trees and $\alpha$-stable L\'evy trees. We also prove Theorem~\ref{Theo4} and Lemma \ref{lemma7}, results that may be of independent interest and are used in the proof of Theorem~\ref{NewTheo} in Section \ref{Sec15}. Readers familiar with the framework can skip Sections \ref{Sec5} and \ref{Sec10} and proceed directly to the proof of Theorem~\ref{NewTheo}.

\section{The Gromov-weak topology} \label{Sec5}

We start recalling basic notation. For a topological space $\mathcal{X}$ and a finite measure $\mu$ defined on the Borel $\sigma$-algebra of $\mathcal{X}$, the push forward of $\mu$ under a measurable map $\phi$ from $\mathcal{X}$ into another topological space $\mathcal{Z}$ is the finite measure $\phi_{\ast} \mu$ defined by $\phi_{\ast} \mu(A) \coloneqq \mu (\phi^{-1}(A))$, for all measurable sets $A \subseteq \mathcal{Z}$.

A (rooted) $n$-pointed metric measure space $(\mathcal{X}, r, \rho, \mathbf{u}_{n}, \mu)$ consists of a rooted metric measure space $(\mathcal{X}, r, \rho, \mu)$ and $n \in \mathbb{N}$ additional fixed points $\mathbf{u}_{n} = (u_{1}, \dots, u_{n}) \in \mathcal{X}^{n}$. The support of an $n$-pointed metric measure space $\mathcal{X} = (\mathcal{X}, r, \rho, \mathbf{u}_{n}, \mu)$ is defined by $\text{supp}(\mathcal{X}) \coloneqq \text{supp}(\mu) \cup \{ \rho, u_{1}, \dots, u_{n} \}$; we refer to \cite{Grev2009, LohrVoisinWinter2015} for more details and discussions on metric measure spaces. In particular, for $n=0$, the (rooted) $0$-pointed metric measure space is only the usual rooted metric measure space $(\mathcal{X}, r, \rho, \mu)$.

\begin{definition} \label{Def4}
Two (rooted) $n$-pointed metric measure spaces $(\mathcal{X}, r, \rho, \mathbf{u}_{n}, \mu)$ and $(\mathcal{X}^{\prime}, r^{\prime}, \rho^{\prime}, \mathbf{u}_{n}^{\prime}, \mu^{\prime})$ are called equivalent if there exists an isometry $\phi: {\rm supp}(\mathcal{X})  \rightarrow {\rm supp}(\mathcal{X}^{\prime})$ such that $\phi_{\ast} \mu = \mu^{\prime}$, $\phi(\rho) = \rho^{\prime}$ and $\phi(u_{k}) = u_{k}^{\prime}$, for all $1 \leq k \leq n$. We denote by $\mathbb{H}_{n}$ the space of (rooted) $n$-pointed metric measure spaces. In particular, $\mathbb{H}_{0}$ denotes the usual space of rooted metric measure spaces.
\end{definition}

For a rigorous definition of the Gromov-weak topology, we refer to Gromov's book \cite{Grom2007} or the works \cite{Grev2009, LohrVoisinWinter2015}. Here, we provide a brief overview: the Gromov-weak topology is metrized by the so-called pointed Gromov-Prokhorov metric $d_{\rm pGP}$; see \cite[Proposition 2.6]{LohrVoisinWinter2015}. Let  $d_{\rm Pr}$ be the Prohorov distance between two finite measures $\mu, \mu^{\prime}$ on a metric space $(E, r_{E})$, i.e.,
\begin{eqnarray} \label{Proh1}
d_{\rm Pr}^{(E, r_{E})}(\mu, \mu^{\prime}) \coloneqq \inf \left \{ \varepsilon > 0: \mu(A^{\varepsilon}) + \varepsilon \geq \mu^{\prime}(A), \hspace*{2mm} \mu^{\prime}(A^{\varepsilon}) + \varepsilon \geq \mu(A)  \hspace*{2mm} \text{for all} \hspace*{2mm} A  \subseteq E \hspace*{2mm} \text{closed} \right\},
\end{eqnarray}

\noindent where $A^{\varepsilon} := \{ x \in E: r_{E}(x,A) < \varepsilon \}$. The pointed Gromov-Prohorov distance between two $n$-pointed metric measure spaces $(\mathcal{X}, r, \rho, \mathbf{u}_{n}, \mu)$ and $(\mathcal{X}^{\prime}, r^{\prime}, \rho^{\prime}, \mathbf{u}_{n}^{\prime}, \mu^{\prime})$ in $\mathbb{H}_{n}$ is given by
\begin{eqnarray} \label{Proh2}
d_{\rm pGP}(\mathcal{X}, \mathcal{X}^{\prime}) \coloneqq \inf_{d} \left \{ d_{\rm Pr}^{(\mathcal{X} \uplus \mathcal{X}^{\prime}, d)}(\mu, \mu^{\prime}) + d(\rho, \rho^{\prime})  + \sum_{k=1}^{n} d(u_{k}, u_{k}^{\prime}) \right\},
\end{eqnarray}

\noindent where the infimum is taken over all metrics $d$ on the disjoint union $\mathcal{X} \uplus \mathcal{X}^{\prime}$ that extends $r$ and $r^{\prime}$. In particular, $(\mathbb{H}_{n}, d_{\rm pGP})$ is a complete and separable metric space; see \cite[Proposition 2.6]{LohrVoisinWinter2015}.

\section{Galton-Watson trees and random walks}  \label{Sec10}

\subsection{Definitions}  \label{Sec11}

We define rooted plane trees following \cite{Ne1986, Leje2005}. Let $\mathbb{N} = \{1, 2, \dots \}$ be the set of positive integers, set $\mathbb{N}^{0} = \{ \varnothing  \}$ and consider the set of labels $\mathbb{U} = \bigcup_{n \geq 0} \mathbb{N}^{n}$. For $u = (u_{1}, \dots, u_{n}) \in \mathbb{U}$, we denote by $|u| = n $ the length (or generation, or height) of $u$; if $v = (v_{1}, \dots, v_{m}) \in \mathbb{U}$, we let $uv = (u_{1}, \dots, u_{n}, v_{1}, \dots, v_{m}) \in \mathbb{U}$ be the concatenation of $u$ and $v$. A rooted plane tree is a nonempty, finite subset $\tau \subset  \mathbb{U}$ such that: (i) $\varnothing \in \tau$; (ii) if $v \in \tau$ and $v = uj$ for some $j \in \mathbb{N}$, then $u \in \tau$; (iii) if $u \in \tau$, then there exists an integer $c(u) \geq 0$ such that $ui \in  \tau$ if and only if $1 \leq i \leq c(u)$. We view each vertex $u$ in a plane tree $\tau$ as an individual of a population with genealogical tree $\tau$. The vertex $\varnothing$ is called the root of the tree and for every $u \in \tau$, $c(u)$ is the number of children of $u$ (if $c(u) = 0$, then $u$ is called a leaf, otherwise, $u$ is called an internal vertex). The total progeny of $\tau$ will be denoted by $\zeta(\tau) = \textsf{Card}(\tau)$ (i.e., the number of vertices of $\tau$). If $u,v \in \tau$ we denote by $[u,v]$ the discrete geodesic path (of vertices) between $u$ and $v$ in $\tau$. We also write $[u,v[ \coloneqq [u,v] \setminus \{v\}$. We denote by $\mathbb{T}$ the set of finite plane trees.

We write $u \preccurlyeq v$ if $v = uw$ for some $w \in \mathbb{U}$ ($\preccurlyeq$ is the ``genealogical'' order). In particular, we write $u \prec v$ if $u \neq v$. We also denote by $u \wedge v$ the most recent common ancestor of $u$ and $v$, i.e., $u \wedge v  \coloneqq \sup \{ w \in \tau: w \preccurlyeq u \, \, \text{and} \, \, w \preccurlyeq v\}$, where the supremum is taken for the genealogical order.

\subsection{Coding planar trees by discrete paths}  \label{Sec12}

Let $\tau \in \mathbb{T}$ be a rooted plane tree $\tau \in \mathbb{T}$. In this work, we will use two different orderings of the vertices of a tree $\tau$:
\begin{enumerate}[label=(\textbf{\roman*})]
\item \textbf{Lexicographical ordering.} Given $u,w \in \tau$, we write $v \prec_{\uparrow} w$ if there exits $z \in \tau$ such that $v = z(v_{1}, \dots, v_{n})$, $w = z(w_{1}, \dots, w_{m})$ and $v_{1} < w_{1}$.
\item \textbf{Reverse-lexicographical ordering.} Given $u,w \in \tau$, we write $v \prec_{\downarrow} w$ if there exists $z \in \tau$ such that $v = z (v_{1},...,v_{n})$, $w =z(w_{1},...,w_{m})$ and $v_{1} > w_{1}$.
\end{enumerate}

\begin{figure}[!htb]
   \begin{minipage}{0.48\textwidth}
     \centering
      \begin{tikzpicture}[scale=.6,font=\footnotesize]
\tikzstyle{solid node}=[circle,draw,inner sep=1.5,fill=black]
    \tikzstyle{level 1}=[sibling distance=35mm]
    \tikzstyle{level 2}=[sibling distance=15mm]
    \tikzstyle{level 3}=[sibling distance=10mm]
\node[solid node, label=below:{$0$}]{}
[grow=north]
child{node[solid node, label=left:{$9$}]{}
   child{node[solid node, label=left:{$16$}]{}}
   child{node[solid node, label=left:{$15$}]{}}
   child{node[solid node, label=left:{$11$}]{}
     child{node[solid node, label=left:{$12$}]{}
        child{node[solid node, label=left:{$14$}]{}}
        child{node[solid node, label=left:{$13$}]{}}
       }
     }
   child{node[solid node, label=left:{$10$}]{}}
}
child{node[solid node, label=left:{$7$}]{}
   child{node[solid node, label=left:{$8$}]{}}
}
child{node[solid node, label=left:{$1$}]{}
   child{node[solid node, label=left:{$3$}]{}
     child{node[solid node, label=left:{$6$}]{}}
     child{node[solid node, label=left:{$5$}]{}}
     child{node[solid node, label=left:{$4$}]{}}
   }
   child{node[solid node, label=left:{$2$}]{}}
}
;
\end{tikzpicture}
   \end{minipage}\hfill
   \begin {minipage}{0.48\textwidth}
     \centering
    \begin{tikzpicture}[scale=.6,font=\footnotesize]
\tikzstyle{solid node}=[circle,draw,inner sep=1.5,fill=black]
    \tikzstyle{level 1}=[sibling distance=35mm]
    \tikzstyle{level 2}=[sibling distance=15mm]
    \tikzstyle{level 3}=[sibling distance=10mm]
\node[solid node, label=below:{$0$}]{}
[grow=north]
child{node[solid node, label=left:{$1$}]{}
   child{node[solid node, label=left:{$2$}]{}}
   child{node[solid node, label=left:{$3$}]{}}
   child{node[solid node, label=left:{$4$}]{}
     child{node[solid node, label=left:{$5$}]{}
        child{node[solid node, label=left:{$6$}]{}}
        child{node[solid node, label=left:{$7$}]{}}
       }
     }
   child{node[solid node, label=left:{$8$}]{}}
}
child{node[solid node, label=left:{$9$}]{}
   child{node[solid node, label=left:{$10$}]{}}
}
child{node[solid node, label=left:{$11$}]{}
   child{node[solid node, label=left:{$12$}]{}
     child{node[solid node, label=left:{$13$}]{}}
     child{node[solid node, label=left:{$14$}]{}}
     child{node[solid node, label=left:{$15$}]{}}
   }
   child{node[solid node, label=left:{$16$}]{}}
}
;
\end{tikzpicture}
   \end{minipage}
\caption{From left to right, a plane tree with vertices labeled in lexicographical and reverse lexicographical order.}\label{Fig2}
\end{figure}
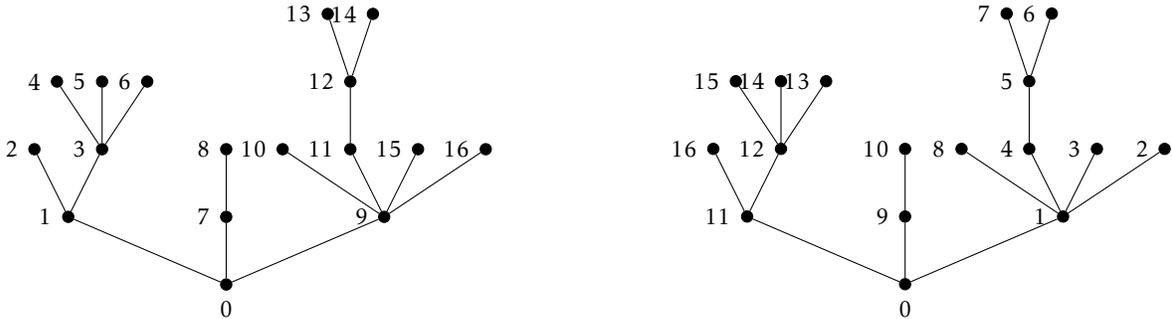

\noindent For $\ast \in \{\uparrow, \downarrow \}$, we associate to every ordering $\varnothing = u(0) \prec_{\ast} u(1) \prec_{\ast} \dots \prec_{\ast} u(\zeta(\tau)-1)$ of the vertices of $\tau$ a path $W^{\ast} = (W^{\ast}(n), 0 \leq n \leq \zeta(\tau))$, by letting $W^{\ast}(0) = 0$, and for $0 \leq n \leq \zeta(\tau)-1$, $W^{\ast}(n+1) = W^{\ast}(n)  + c(u(n))-1$. The path $W^{\uparrow}$ is commonly called the $\L$ukasiewicz path of $\tau$, and through this work, we will call $W^{\downarrow}$ the reverse-$\L$ukasiewicz path. It is easy to see that, for every $\ast \in \{\uparrow, \downarrow \}$, $W^{\ast}(n) \geq 0$ for every $0 \leq n \leq \zeta(\tau)-1$ but $W^{\ast}(\zeta(\tau)) = -1$. We refer to \cite{Leje2005} for details, properties and proofs of the assertions we will use on the $\L$ukasiewicz path (clearly, similar ones hold for the reverse-$\L$ukasiewicz path).  By definition,
\begin{eqnarray}
\Delta W^{\ast}(n+1) \coloneqq W^{\ast}(n+1) - W^{\ast}(n) = c(u(n)) -1 \geq -1, \hspace*{5mm} \text{for} \hspace*{5mm}
0 \leq n \leq \zeta(\tau)-1,
\end{eqnarray}

\noindent with equality if and only if $u(n)$ is a leaf of $\tau$. We shall think of such paths as step functions on $[0,\zeta(\tau)]$ given by $s \mapsto W^{\ast}(\lfloor s \rfloor)$, for $\ast \in \{\uparrow, \downarrow \}$; see Figure \ref{Fig3}.

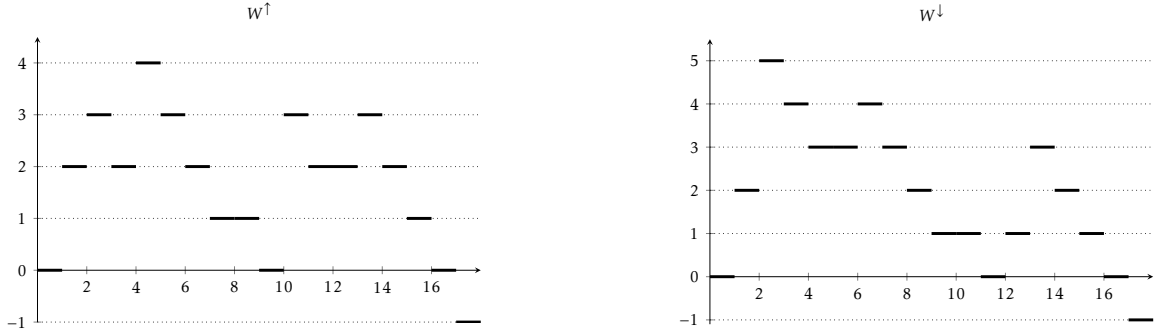
\begin{figure}[!htb]
   \begin{minipage}{0.48\textwidth}
     \centering
\begin{tikzpicture}[scale=.7,font=\footnotesize]
\begin{axis}[axis x line = middle,axis y line = left, title = $W^{\uparrow}$, width=10cm, height=7cm, xtick={0,2,...,16}, ytick={-1,0,...,4}, xmin=0, xmax=18,  ymin=-1, ymax=4.5]
\addplot[domain=0:1, ultra thick] {0};
\addplot[domain=1:2, ultra thick] {2};
\addplot[domain=2:3, ultra thick] {3};
\addplot[domain=3:4, ultra thick] {2};
\addplot[domain=4:5, ultra thick] {4};
\addplot[domain=5:6, ultra thick] {3};
\addplot[domain=6:7, ultra thick] {2};
\addplot[domain=7:8, ultra thick] {1};
\addplot[domain=8:9, ultra thick] {1};
\addplot[domain=9:10, ultra thick] {0};
\addplot[domain=10:11, ultra thick] {3};
\addplot[domain=11:12, ultra thick] {2};
\addplot[domain=12:13, ultra thick] {2};
\addplot[domain=13:14, ultra thick] {3};
\addplot[domain=14:15, ultra thick] {2};
\addplot[domain=15:16, ultra thick] {1};
\addplot[domain=16:17, ultra thick] {0};
\addplot[domain=17:18, ultra thick] {-1};
\addplot[domain=0:18, dotted] {-1};
\addplot[domain=0:18, dotted] {1};
\addplot[domain=0:18, dotted] {2};
\addplot[domain=0:18, dotted] {3};
\addplot[domain=0:18, dotted] {4};
 \end{axis}
\end{tikzpicture}
   \end{minipage}\hfill
   \begin {minipage}{0.48\textwidth}
     \centering
    \begin{tikzpicture}[scale=.7,font=\footnotesize]
\begin{axis}[axis x line = middle,axis y line = left, title = $W^{\downarrow}$,  width=10cm, height=7cm, xtick={0,2,...,16}, ytick={-1,0,1,...,5}, xmin=0, xmax=18,  ymin=-1.1, ymax=5.5]
\addplot[domain=0:1, ultra thick] {0};
\addplot[domain=1:2, ultra thick] {2};
\addplot[domain=2:3, ultra thick] {5};
\addplot[domain=3:4, ultra thick] {4};
\addplot[domain=4:5, ultra thick] {3};
\addplot[domain=5:6, ultra thick] {3};
\addplot[domain=6:7, ultra thick] {4};
\addplot[domain=7:8, ultra thick] {3};
\addplot[domain=8:9, ultra thick] {2};
\addplot[domain=9:10, ultra thick] {1};
\addplot[domain=10:11, ultra thick] {1};
\addplot[domain=11:12, ultra thick] {0};
\addplot[domain=12:13, ultra thick] {1};
\addplot[domain=13:14, ultra thick] {3};
\addplot[domain=14:15, ultra thick] {2};
\addplot[domain=15:16, ultra thick] {1};
\addplot[domain=16:17, ultra thick] {0};
\addplot[domain=17:18, ultra thick] {-1};
\addplot[domain=0:18, dotted] {-1};
\addplot[domain=0:18, dotted] {1};
\addplot[domain=0:18, dotted] {2};
\addplot[domain=0:18, dotted] {3};
\addplot[domain=0:18, dotted] {4};
\addplot[domain=0:18, dotted] {5};
 \end{axis}
\end{tikzpicture}
   \end{minipage}
        \caption{From left to right, the paths  $W^{\uparrow}$ and $W^{\downarrow}$ of the labeled plane trees in Figure \ref{Fig2}.} \label{Fig3}
\end{figure}

Note the following useful elementary properties of these coding paths:
\begin{enumerate}[label=(\textbf{P.\arabic*})]
\item For every $0 \leq  n \leq \zeta(\tau) - 1$, $W^{\uparrow}(n)$ is equal to the number of children of vertices in the path $[ u(0), u(n)[$  branching
to the right of $[ u(0), u(n)[$. \label{P1}

\item For every $0 \leq  n\leq \zeta(\tau) - 1$, $W^{\downarrow}(n)$ is equal to the number of children of vertices in the path $[ u(0), u(n)[$  branching
to the left of $[u(0), u(n)[$. \label{P2}
\end{enumerate}

\noindent The discrete genealogical order $\preccurlyeq$  of $\tau$ can be recovered from $W^{\ast}$ (see the proof of \cite[Proposition 1.2]{Leje2005} for details). More precisely,  for $\ast \in \{\uparrow, \downarrow \}$, we consider the sequence $\varnothing = u(0) \prec_{\ast} u(1) \prec_{\ast} \dots \prec_{\ast} u(\zeta(\tau)-1)$ of vertices of $\tau$ in lexicographical or reverse-lexicographical order. For $i, j \in \{0, \dots, \zeta(\tau)-1 \}$, we have that
\begin{eqnarray} \label{eq8}
u(i) \preccurlyeq u(j) \hspace*{4mm} \text{if and only if} \hspace*{4mm} i \leq j \hspace*{4mm} \text{and}  \hspace*{4mm} W^{\ast}(i) = \inf_{i \leq n \leq j} W^{\ast}(n).
\end{eqnarray}

\noindent  In words, $u(i)$ is an ancestor of $u(j)$ that is equivalent to say that $u(i) \in [u(0), u(j)]$. In particular, $u(i) \prec u(j)$ when $u(i) \preccurlyeq u(j)$ and $i \neq j$.

There are two other discrete paths that can be used to code a rooted plane tree $\tau$. Detailed description and properties can be found for example in \cite{Du2003, Du2005}.

\paragraph{The height process.} Let $\varnothing = u(0) \prec_{\uparrow} u(1) \prec_{\uparrow} \dots \prec_{\uparrow} u(\zeta(\tau)-1)$ be the sequence of vertices of $\tau$ in lexicographical order. We define the height process $H = (H(n): 0 \leq n \leq \zeta(\tau) -1)$  by letting $H(n) = |u(n)| = r_{\tau}^{\text{gr}} (\varnothing, u(n))$, for every $n \in \{0, \dots, \zeta(\tau)-1 \}$, where $r_{\tau}^{\text{gr}}$ denotes the graph distance in $\tau$. We also think of $H$ as a continuous function on $[0,  \zeta(\tau)-1]$, obtained by linear interpolation $s \mapsto (1-\{s\})H(\lfloor s \rfloor) + \{s\} H(\lfloor s \rfloor+1)$, where $\{x \} = x - \lfloor x \rfloor$.

\paragraph{The contour process.} Define the contour sequence $\{y(0), y(1), \dots, y(2(\zeta(\tau)-1))\}$ of $\tau$ as follows: $y(0) = \varnothing$ and for each $i \in \{0, \dots, 2(\zeta(\tau)-1)\}$, $y(i+1)$ is either the first child of $y(i)$ which does not appear in the sequence $\{y(0), y(1), \dots, y(i)\}$, or the parent of $y(i)$ if all its children already appear in this sequence. We then define the contour process $C = (C(n), 0 \leq n \leq 2(\zeta(\tau)-1))$ by letting $C(n) = |y(n)| = r_{\tau}^{\text{gr}} (\varnothing , y(n))$, for every $n \in \{0, \dots, 2(\zeta(\tau)-1) \}$. Again, we shall think of $C$ as a continuous function on $[0, 2(\zeta(\tau)-1)]$, obtained by linear interpolation. Note that $C(n) \geq 0$ for every $n \in \{0, \dots, 2(\zeta(\tau)-1) \}$ and $C(0) = C(2(\zeta(\tau)-1))= 0$. \\

Let $\mathbf{t}_{N}$ be an $\alpha$-stable Galton-Watson tree of index $\alpha \in (1,2]$ (i.e.\ a critical GW-trees whose offspring distribution lies in the domain of attraction of an $\alpha$-stable law, conditioned to have total size $N \in \mathbb{N}$). Recall that an $\alpha$-stable Galton-Watson tree is a random element of $\mathbb{T}$; see, for instance, \cite{Du2003, Leje2005}. The asymptotic behaviour of $\mathbf{t}_{N}$ is well understood, in particular through scaling limits of its different coding functions. We write $W_{N}^{\uparrow} = (W_{N}^{\uparrow}( \lfloor N t \rfloor ), t \in [0,1])$, $H_{N} = (H_{N}((N-1) t), t \in [0,1])$ and $C_{N} = (C_{N}(2(N-1) t), t \in [0,1])$ for the (normalized) $\L$ukasiewicz, height process and contour process of $\mathbf{t}_{N}$. Let $X_{\alpha} = (X_{\alpha}(t), t \in [0,1])$ be the so-called normalized excursion of a spectrally positive strictly stable L\'evy process of index $\alpha$ and let $H_{\alpha} = (H_{\alpha}(t), t \in [0,1])$ be its associated continuous excursion height process. We refer to \cite{Du2003} for their precise construction as we will not need them here. The following known results will be useful.

\begin{theorem}[\cite{Al1993}, \cite{Du2003}, \cite{Kortchemski2013}] \label{Theo3}
Let $\mathbf{t}_{N}$ be an $\alpha$-stable Galton-Watson tree. There exists a slowly varying function $\ell: \mathbb{R}_{+} \rightarrow \mathbb{R}_{+}$ such that for $a_{N} = \ell(N) N^{1-\frac{1}{\alpha}}$ and $b_{N} = \ell(N)^{-1}N^{-\frac{1}{\alpha}}$, the triple
\begin{eqnarray}
\left( \left( a_{N} C_{N}( 2(N-1)t ), a_{N} H_{N}( (N-1)t ), b_{N} W_{N}^{\uparrow}(\lfloor Nt \rfloor) \right), t \in [0,1] \right)
\end{eqnarray}

\noindent converges in distribution towards
\begin{eqnarray}
\left( \left( H_{\alpha}(t), H_{\alpha}(t), X_{\alpha}(t) \right), t \in [0,1] \right), \hspace*{4mm} \text{as} \hspace*{2mm} N \rightarrow \infty,
\end{eqnarray}

\noindent in the space $\mathbb{C}([0,1], \mathbb{R}) \times \mathbb{C}([0,1], \mathbb{R}) \times \mathbb{D}([0,1], \mathbb{R})$, where $\mathbb{C}([0,1], \mathbb{R})$ is the space of real-valued continuous functions on $[0,1]$ equipped with uniform topology and $\mathbb{D}([0,1], \mathbb{R})$ is the space of real-valued c\`adl\`ag functions on $[0,1]$ equipped with the Skorohod ${\rm J}_{1}$ topology.
\end{theorem}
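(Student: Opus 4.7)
The plan is to establish the three marginal convergences and then promote them to joint convergence by observing that all three rescaled coding processes are measurable functions of the same offspring sequence $(\xi_i)_{i\ge 1}$, so the scaling limits are all deterministic functionals of a single object, the normalized excursion $X_{\alpha}$.

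First I would handle the \L{}ukasiewicz path. For an \emph{unconditioned} GW-tree with offspring law $\eta$, the \L{}ukasiewicz path is the random walk $S_n=\sum_{i=1}^n(\xi_i-1)$ whose step is centred and in the domain of attraction of a spectrally positive $\alpha$-stable law. Classical Skorokhod invariance gives a slowly varying $\ell$ and $b_N=\ell(N)^{-1}N^{-1/\alpha}$ such that $(b_N S_{\lfloor Nt\rfloor})_{t\in[0,1]}\Rightarrow(X_{\alpha}^{\mathrm{free}}(t))_{t\in[0,1]}$. Conditioning on $\{\zeta(\tau)=N\}$ is conditioning on the walk hitting $-1$ for the first time at step $N$; combining a Vervaat-style cyclic lemma with the local limit theorem for the conditioned walk then upgrades this to $b_N W^{\uparrow}_N(\lfloor N\cdot\rfloor)\Rightarrow X_{\alpha}$ in $\mathbb{D}([0,1],\mathbb{R})$.

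Next I would transfer this to the height process via the Le Gall--Le Jan combinatorial identity
\begin{equation*}
H_N(n)=\#\Big\{0\le k<n:\ W^{\uparrow}_N(k)=\min_{k\le j\le n}W^{\uparrow}_N(j)\Big\}.
\end{equation*}
This is the main technical obstacle: the functional $W\mapsto H$ derived from this formula is not continuous for the Skorokhod topology, so the \L{}ukasiewicz convergence does not transfer by soft means. Following Duquesne I would first prove tightness of $(a_N H_N)_N$ with $a_N=\ell(N)N^{1/\alpha-1}$ by bounding the moments of increments of $H_N$ through ladder-epoch statistics of the conditioned walk, exploiting that the step $\xi-1$ has centred regularly varying tails. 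Finite-dimensional convergence is then obtained by matching Laplace functionals: each limiting marginal coincides with the continuous height process obtained from $X_{\alpha}$ by the Le Gall--Le Jan asymptotic formula
\begin{equation*}
H_{\alpha}(t)=\lim_{\varepsilon\to 0}\frac{1}{\varepsilon}\int_0^{t}\mathbf{1}\Big\{X_{\alpha}(s)<\inf_{s\le u\le t}X_{\alpha}(u)+\varepsilon\Big\}\,\mathrm{d}s,
\end{equation*}
so that $a_N H_N((N-1)\cdot)\Rightarrow H_{\alpha}$ in $\mathbb{C}([0,1],\mathbb{R})$.

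Finally I would treat the contour process. The depth-first contour traversal visits each edge twice, so $C_N$ is obtained from $H_N$ by a deterministic time change; a standard combinatorial comparison gives $\sup_{t\in[0,1]}|C_N(2(N-1)t)-H_N(\lfloor (N-1)t\rfloor)|\le 1+\sup_{|i-j|\le K_N}|H_N(i)-H_N(j)|$ for a small random $K_N$, and rescaling by $a_N$ makes this error vanish by the tightness already established for $a_N H_N$. Hence $a_N C_N$ shares the same limit $H_{\alpha}$. Joint convergence of the triple follows since the three rescaled processes are coupled through the same offspring sequence and the limiting pair $(H_{\alpha},X_{\alpha})$ is built from the same stable excursion. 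The serious work is concentrated in the height-process step, where precise fluctuation estimates for the conditioned walk $W^{\uparrow}_N$ are unavoidable; the other two ingredients are either a classical invariance principle or a tightness argument combined with a deterministic time-change comparison.
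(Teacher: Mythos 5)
The paper does not give its own proof of Theorem~\ref{Theo3}: it is quoted as a well-known result due to Duquesne \cite{Du2003} (and Aldous \cite{Al1993} for $\alpha=2$). Your outline is a faithful sketch of exactly that cited argument---invariance principle plus cyclic lemma and local limit theorem for the conditioned \L{}ukasiewicz path, tightness and finite-dimensional convergence for the height process through the Le Gall--Le Jan identity, and the deterministic time-change comparison for the contour process, with joint convergence coming from the common coupling through the offspring sequence---so it matches the approach the paper relies on.
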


In the particular case $\alpha =2$, we have that $(H_{2}, X_{2}) = (2 X_{2}, X_{2})$, and $X_{2}$ is the standard normalized Brownian excursion. On the other hand, the function $\ell$ is exactly the same that appears in (\ref{eq1}).

Let $W_{N}^{\downarrow} = (W_{N}^{\downarrow}( \lfloor N t \rfloor ), t \in [0,1])$ denote the (normalized) reverse-$\L$ukasiewicz of $\mathbf{t}_{N}$. The following result is a consequence of Theorem \ref{Theo3}.

\begin{lemma}[{\cite[Lemma 2.4 and Lemma 4.2]{Dieu2015}}] \label{Cor3}
Let $\mathbf{t}_{N}$ be an $\alpha$-stable GW-tree and let $(b_{N})_{N \in \mathbb{N}}$ be as in Theorem \ref{Theo3}. There exists a process $\tilde{X}_{\alpha} = (\tilde{X}_{\alpha}(t),  t \in[0, 1])$ with the same distribution as $X_{\alpha}$ such that, jointly with the convergence of Theorem \ref{Theo3}, we have that, in distribution,
\begin{eqnarray}
\left( \left( b_{N} W_{N}^{\uparrow}(\lfloor Nt \rfloor), b_{N} W_{N}^{\downarrow}(\lfloor Nt \rfloor) \right), t \in [0,1]  \right) \rightarrow \left( \left(  X_{\alpha}(t), \tilde{X}_{\alpha}(t)  \right), t \in[0,1]  \right), \hspace*{4mm} \text{as} \hspace*{2mm} N \rightarrow \infty,
\end{eqnarray}
\noindent in the space $\mathbb{D}([0,1], \mathbb{R}) \times \mathbb{D}([0,1], \mathbb{R})$. Moreover,
\begin{enumerate}[label=(\textbf{\roman*})]
\item For $\alpha = 2$, $\tilde{X}_{2} = (X_{2}(1-t), t \in [0,1])$.

\item For $\alpha \in (1,2)$ and every jump-time $t$ of $X_{\alpha}$, 
\begin{align*}
\Delta_{t} \coloneqq X_{\alpha}(t) - X_{\alpha}(t-) = \tilde{X}_{\alpha}(1- t-\gamma(t)) - \tilde{X}_{\alpha}((1- t-\gamma(t))-)
\end{align*}
\noindent a.s., where $\gamma(t) \coloneqq \inf\{ s \in (t, 1]: X_{\alpha}(s) = X_{\alpha}(t-) \} - t$.
\end{enumerate}
\end{lemma}

\subsection{The $\alpha$-stable L\'evy tree}  \label{Sec13}

This section recalls the excursion representation of the $\alpha$-stable L\'evy tree of index $\alpha \in (1,2]$ and some of its well-established properties. Let $H_{\alpha} =(H_{\alpha}(t), t \in [0,1])$ be the excursion height process associated to $X_{\alpha}$. Recall that $H_{\alpha}$ is continuous, satisfies $H_{\alpha}(0) = H_{\alpha}(1)=0$ and $H_{\alpha}(t) > 0$ for all $t \in (0,1)$; see \cite{Du2003}. The $\alpha$-stable L\'evy tree is the random compact rooted measure $\mathbb{R}$-tree coded by the excursion height process $H_{\alpha}$ as follows. Consider the pseudo-distance on $[0,1]$,
\begin{eqnarray}
r_{\alpha}(s,t) \coloneqq H_{\alpha}(s) + H_{\alpha}(t) - 2 \inf_{u \in [s \wedge t, s \vee t]} H_{\alpha}(u), \hspace*{4mm} \text{for} \hspace*{2mm} s,t \in [0,1],
\end{eqnarray}

\noindent and define an equivalence relation on $[0,1]$ by setting $s \sim_{\alpha} t$ if and only if $r_{\alpha}(s,t) = 0$. The image of the projection $p_{\alpha}:[0,1] \rightarrow [0,1] \setminus \sim_{\alpha}$ endowed with the push forward of $r_{\alpha}$ (again denoted $r_{\alpha}$), i.e.\ $(\mathcal{T}_{\alpha}, r_{\alpha}, \rho_{\alpha}) := (p_{\alpha}([0,1]), r_{\alpha}, p_{\alpha}(0))$, is the compact rooted $\mathbb{R}$-tree; see Evans and Winter \cite[Lemma 3.1]{EW2006}. Furthermore, $\mathcal{T}_{\alpha}$ is endowed with the probability measure $\mu_{\alpha} := (p_{\alpha})_{\ast} {\rm Leb}_{[0,1]}$ (the push forward of the Lebesgue measure ${\rm Leb}$ on $[0,1]$). Finally, $\mathcal{T}_{\alpha} = (\mathcal{T}_{\alpha}, r_{\alpha}, \rho_{\alpha}, \mu_{\alpha})$ is the rooted measure $\mathbb{R}$-tree known as the $\alpha$-stable L\'evy tree. Recall that the support of $\mu_{\alpha}$ is the set of leaves of $\mathcal{T}_{\alpha}$; see \cite[Theorem 4.6]{Du2005}. 

The genealogical order $\preccurlyeq$ in $\mathcal{T}_{\alpha}$ is defined for $x, y \in \mathcal{T}_{\alpha}$ by letting $x \preccurlyeq y$ if $x \in [\rho_{\alpha}, y]$, where $\rho_{\alpha}$ is the root of $\mathcal{T}_{\alpha}$. In particular, if for $x \neq y$, we write $x \prec y$. If $x,y \in \mathcal{T}_{\alpha}$  there exists a unique $z \in \mathcal{T}_{\alpha}$ such that $[\rho_{\alpha}, x] \cap [\rho_{\alpha}, y] = [\rho_{\alpha}, z]$, then $z$ is called the most recent common ancestor of $x$ and $y$, and it is denoted by $z = x \wedge y$.

As in the discrete case, the genealogical order $\preccurlyeq$ of the $\alpha$-stable L\'evy tree can also be recovered from the path $X_{\alpha} = (X_{\alpha}(t), t \in [0,1])$. We define a partial order on $[0,1]$, denoted by $\preccurlyeq$, which is compatible with the projection $p_{\alpha} :[0,1] \rightarrow \mathcal{T}_{\alpha}$ by setting, for every $s,t \in [0,1]$,
\begin{eqnarray}
s \preccurlyeq t \hspace*{5mm} \text{if} \hspace*{5mm} s \leq t \hspace*{4mm} \text{and} \hspace*{4mm} X_{\alpha}(s-) \leq I_{\alpha}(s,t),
\end{eqnarray}

\noindent where $I_{\alpha}(s,t) = \inf_{u \in [s, t]} X_{\alpha}(u)$ for $0 \leq s \leq t \leq 1$. By convention $X_{\alpha}(0-) = 0$. In particular, $s \prec t$ when $s \preccurlyeq t$ and $s \neq t$. It is a simple matter to check that $\preccurlyeq$ is indeed a partial order which is compatible with the genealogical order on $\mathcal{T}_{\alpha}$, meaning that a point $x \in \mathcal{T}_{\alpha}$ is an ancestor of $y \in \mathcal{T}_{\alpha}$ if and only if there exist $s \preccurlyeq t \in [0,1]$ with $x = p_{\alpha}(s)$ and $y=p_{\alpha}(t)$ (equivalently, $x \in [\rho_{\alpha}, y]$ with $\rho_{\alpha} = p_{\alpha}(0)$). For every $s,t \in [0,1]$, let $s \wedge t$ be the most recent common ancestor (for the relationship $\preccurlyeq$ on $[0,1]$) of $s$ and $t$. Then $p_{\alpha}(s \wedge t)$ also is the most recent common ancestor of $p_{\alpha}(s)$ and $p_{\alpha}(t)$ in $\mathcal{T}_{\alpha}$.

For $\alpha \in (1,2)$, the branching-points of $\mathcal{T}_{\alpha}$ are in one-to-one correspondence with the jumps of $X_{\alpha}$; see \cite[Proposition 2]{Miermont2005}. More precisely, write $\Delta_{t} \coloneqq X_{\alpha}(t) -  X_{\alpha}(t-)$, for $t \in (0,1]$, and $\Delta_{0} \coloneqq 0$. Then, $x \in \mathcal{T}_{\alpha}$ is a branching-point if and only if there exists a unique $t \in [0, 1]$ such that $x= p_{\alpha}(t)$ and $\Delta_{t}  > 0$. In particular, for any $x=p_{\alpha}(t)$ a branching-point, one can recover the jump $\Delta_{t}$ as the local time (or width) of $x$ defined in (\ref{eq18}).

\subsection{Further properties}  \label{Sec14}

In this section, we derive additional asymptotic properties of the reverse-$\L$ukasiewicz paths of the $\alpha$-stable GW-tree $\mathbf{t}_{N}$.

Let $u(0) \prec_{\uparrow} u(1) \prec_{\uparrow} \dots \prec_{\uparrow} u(N-1)$ (resp.\ $u(0) \prec_{\downarrow} u(1) \prec_{\downarrow} \dots \prec_{\downarrow} u(N-1)$) be the vertices of $\mathbf{t}_{N}$, listed in lexicographical order (resp.\ in reverse-lexicographical order). Recall the definition of the discrete genealogical order $\preccurlyeq$ in \eqref{eq8}. In this section, we will, with a slight abuse of notation, write $n  \preccurlyeq m$ (resp.\ $n \prec m$) to denote $u(n)  \preccurlyeq u(m)$ (resp.\ $u(n) \prec u(m)$), for $n,m = 0, \dots, N-1$. Define $\sigma_{N}^{\uparrow}(1) \coloneqq 0$ and for $t \in [0,1)$, 
\begin{eqnarray} \label{eq24}
\sigma_{N}^{\uparrow}(t) \coloneqq \sideset{}{^{\uparrow}}\sum_{0 \preccurlyeq k \preccurlyeq \lfloor Nt \rfloor} \Delta W^{\uparrow}_{N}(k+1) - \Delta W^{\uparrow}_{N}(\lfloor Nt \rfloor+1) \wedge 0,
\end{eqnarray}
\noindent where the sum $\sum_{0 \preccurlyeq k \preccurlyeq \lfloor Nt \rfloor}^{\uparrow}$ is over all ancestors of the vertex $u(\lfloor Nt \rfloor)$, with respect to the lexicographical order.
Note that, for $t \in [0,1)$, $\sigma_{N}^{\uparrow}(t)$ is the sum of the quantities $c(v)-1$ over all ancestors $v \in \mathbf{t}_{N}$ of $u(\lfloor Nt \rfloor)$ that are not a leaf, with respect to the lexicographical order. Recall that $c(v)$ denotes the number of children of the vertex $v$. Note that $u(\lfloor Nt \rfloor)$ is the only possible leaf among all the ancestors of $u(\lfloor Nt \rfloor)$.

By the Skorokhod's representation theorem (see, e.g., \cite[Theorem 6.7 in Chapter 1]{Bi1999}), we can and will assume for the remainder of this section that we are working in a probability space where the convergences of Theorem \ref{Theo3} and Lemma \ref{Cor3} hold jointly almost surely.

The first result of this section establishes that, provided $\mathbf{t}_{N}$ is a $2$-stable GW-tree, $b_{N}\sigma_{N}^{\uparrow}$ converges uniformly to $2X_{2}$. 
\begin{theorem} \label{Theo4}
Let $\mathbf{t}_{N}$ be a $2$-stable GW-tree and let $(b_{N})_{N \in \mathbb{N}}$ be as in Theorem \ref{Theo3}. Then, 
\begin{align}
\sup_{t \in [0,1]} \left|b_{N}\sigma_{N}^{\uparrow}(t) - 2 X_{2}(t) \right| \rightarrow 0, \hspace*{4mm} \text{as} \hspace*{2mm}  N \rightarrow \infty, \hspace*{2mm} \text{a.s.} 
\end{align}
\end{theorem}

In preparation for the proof of Theorem \ref{Theo4}, we introduce some useful notation and make some important remarks. Observe that the $(k+1)$-th vertex in lexicographical order $u(k) \in \mathbf{t}_{N}$, for $k=0, \dots, N-1$, corresponds to the $\tilde{k}$-th vertex in reverse-lexicographical order, where
\begin{eqnarray}
\tilde{k} = N-1 - k + |u(k)| - D(k),
\end{eqnarray}
\noindent and 
\begin{align}
D(k) \coloneqq \inf \{ m \in \{k+1, \dots, N\}: W^{\uparrow}_{N}(m) - W^{\uparrow}_{N}(k) = -1 \} - (k+1)
\end{align}
\noindent is the number of strict descendants of the $k$-th vertex in lexicographical order. We adopt the convention that $D(N) \coloneqq 0$. Moreover, we have that
\begin{eqnarray}
\Delta W^{\downarrow}_{N}(\tilde{k} +1 ) = \Delta W^{\uparrow}_{N}(k+1), \hspace*{4mm} \text{for} \hspace*{2mm}  k \in \{0,\dots, N-1\}.
\end{eqnarray}

\noindent  For $0 \leq i < j \leq 	N-1$ and $\ast \in \{\uparrow, \downarrow\}$, set
\begin{eqnarray}
\bar{x}_{i,j}^{\ast} \coloneqq  W^{\ast}_{N}(i+1) - I^{\ast}_{N}(i+1,j) \hspace*{4mm} \text{and} \hspace*{4mm} \underline{x}_{i,j}^{\ast} \coloneqq  I^{\ast}_{N}(i+1,j) -W^{\ast}_{N}(i),
\end{eqnarray}

\noindent where $I^{\ast}_{N}(i+1,j) \coloneqq \inf_{i+1 \leq m \leq j}W^{\ast}_{N}(m)$. Note that for any $0 \leq i < j \leq N-1$ such that $u(0) \preceq u(i) \prec u(j)$, the term $\bar{x}_{i,j}^{\uparrow}$ (resp.\ $\bar{x}_{i,j}^{\downarrow}$) corresponds to the number of children of $u(i)$  branching to the left of $[u(0), u(j)[$ (resp.\ branching to the left of $[u(0), u(j)[$), and $\underline{x}_{i,j}^{\uparrow}$ (resp.\ $\underline{x}_{i,j}^{\downarrow}$) is the number of children of $u(i)$ branching to the right of $[u(0), u(j)[$ (resp.\  branching to the right of $[u(0), u(j)[$).

Then, from properties \ref{P1} and \ref{P2}, we deduce the following useful identities: for $t \in [0,1)$,
\begin{eqnarray} \label{eq10}
\sideset{}{^{\uparrow}}\sum_{0 \preccurlyeq k \prec \lfloor Nt \rfloor} \underline{x}_{k,\lfloor N t \rfloor}^{\uparrow} = W_{N}^{\uparrow}(\lfloor N t \rfloor),
\end{eqnarray}
\noindent and
\begin{eqnarray} \label{eq11}
\sideset{}{^{\uparrow}}\sum_{0 \preccurlyeq k \prec \lfloor Nt \rfloor} \bar{x}_{k,\lfloor Nt \rfloor}^{\uparrow} = \sideset{}{^{\downarrow}}\sum_{0 \preccurlyeq k \prec \lfloor Nt \rfloor}  \underline{x}_{k,\tilde{m}(N,t)}^{\downarrow} = W_{N}^{\downarrow}(\tilde{m}(N,t)),
\end{eqnarray}
\noindent where 
\begin{align} \label{eq31}
\tilde{m}(N,t) \coloneqq N-1-\lfloor Nt \rfloor + |u(\lfloor Nt \rfloor)| - D(\lfloor Nt \rfloor).
\end{align}
\noindent Here the sum $\sum_{0 \preccurlyeq k \prec \lfloor Nt \rfloor}^{\uparrow}$ (resp.\ $\sum_{0 \preccurlyeq k \prec \lfloor Nt \rfloor}^{\downarrow}$) is over all strict ancestors of the vertex $u(\lfloor Nt \rfloor)$, with respect to the lexicographical order (resp.\ with respect to the reverse-lexicographical order). Then, for $t \in [0,1)$,
\begin{eqnarray} \label{eq12}
\sigma_{N}^{\uparrow}(t) =  W_{N}^{\uparrow}(\lfloor N t \rfloor) + W_{N}^{\downarrow}(\tilde{m}(N,t)) + R_{N}(t),
\end{eqnarray}
\noindent where 
\begin{align} \label{eq41}
R_{N}(t) \coloneqq \begin{cases}
\Delta W^{\uparrow}_{N}(\lfloor Nt \rfloor+1) - \Delta W^{\uparrow}_{N}(\lfloor Nt \rfloor+1) \wedge 0, & \quad \text{if} \, \, t \in [0,1), \\
0. & \quad \text{if} \, \,  t =0.
\end{cases}
\end{align}

Before proving Theorem \ref{Theo4}, we will need a technical lemma. The proof relies on properties of the Skorohod ${\rm J}_{1}$  topology for which we could not find a reference, so we have stated and proven them in Appendix \ref{Append1}. Recall the definition of $\gamma(t)$ in Lemma \ref{Cor3}.
\begin{lemma} \label{lemma4}
Let $\mathbf{t}_{N}$ be an $\alpha$-stable GW-tree and let $(t_{N})_{N \in \mathbb{N}}$ be any sequence of real numbers in $[0,1]$ such that $t_{N} \rightarrow t \in [0,1]$, as $N \rightarrow \infty$. Then,
\begin{enumerate}[label=(\textbf{\roman*})]
\item \label{lemma4C1} If $W^{\uparrow}_{N}(\lfloor N t_{N} \rfloor) \rightarrow X_{\alpha}(t-)$ as $N\rightarrow \infty$, then 
$N^{-1}D(\lfloor N t_{N} \rfloor)  \rightarrow \tilde{\gamma}(t-)$ a.s., where
\begin{align}
\tilde{\gamma}(t-) \coloneqq \inf \{ s \in (0, 1]: X_{\alpha}(t+s)-X_{\alpha}(t-) <0 \},
\end{align}
\noindent with the convention $\tilde{\gamma}(1-) \coloneqq 0$. Moreover, $\tilde{\gamma}(0-)=1$ and for $t \in (0,1)$, $\tilde{\gamma}(t-)=\gamma(t)$ if $\Delta_{t} > 0$, and $\tilde{\gamma}(t-)=0$, otherwise.  
\item \label{lemma4C1} If $W^{\uparrow}_{N}(\lfloor Nt_{N} \rfloor) \rightarrow X_{\alpha}(t)$ as  $N\rightarrow \infty$, then $N^{-1}D(\lfloor N t_{N} \rfloor)  \rightarrow \tilde{\gamma}(t)$ a.s., where
\begin{align}
\tilde{\gamma}(t) \coloneqq \inf \{ s \in (0, 1]: X_{\alpha}(t+s)-X_{\alpha}(t) <0 \},
\end{align}
\noindent with the convention $\tilde{\gamma}(1) \coloneqq 0$. Moreover, $\tilde{\gamma}(0)=1$ and for $t \in (0,1)$, $\tilde{\gamma}(t)=0$.  
\end{enumerate}
\end{lemma}

\begin{proof}
Note that $D(k) = \tilde{D}(k)-1$, where 
\begin{align}
\tilde{D}(k) \coloneqq \inf \{ m \in \{1, \dots, N\}: W^{\uparrow}_{N}((k+m)\wedge N) - W^{\uparrow}_{N}(k) <0 \},
\end{align}
\noindent for $k=0, \dots, N$, with the convention $\tilde{D}(N) \coloneqq 1$. Then, 
\begin{align}
N^{-1}\tilde{D}(\lfloor N t_{N} \rfloor) = \inf \{ s \in [N^{-1},1]: W^{\uparrow}_{N}(\lfloor N \theta_{N}(s) \rfloor) - W^{\uparrow}_{N}(\lfloor Nt_{N} \rfloor) <0 \},
\end{align}
\noindent where $\theta_{N}(s)\coloneqq (\frac{\lfloor Ns \rfloor+ \lfloor N t_{N} \rfloor}{N})\wedge 1$, for $s \in [0,1]$.  

Observe that $X_{\alpha}$ satisfies \ref{H1}-\ref{H4}. For $\alpha \in (1,2)$, this is proven in \cite[Proposition 2.10]{Kortchemski20142}, and for $\alpha =2$, these properties are clearly satisfied. Therefore, our claims follow from Lemmas  \ref{lemma1}, \ref{lemma2} and \ref{lemma3} (note that condition \eqref{eq28} is clearly satisfied).
\end{proof}

\begin{proof}[Proof of Theorem \ref{Theo4}]
The proof proceeds by contradiction. Suppose that there exists $\varepsilon >0$ and a sequence of real numbers $(t_{N})_{N \in \mathbb{N}}$ in $[0,1]$ such that for every $N$ sufficiently large
\begin{eqnarray} \label{eq13}
\left | b_{N} \sigma_{N}^{\uparrow}(t_{N}) - 2 X_{2}(t_{N}) \right| \geq \varepsilon.
\end{eqnarray}
\noindent By compactness, we may assume without loss of generality that $t_{N} \rightarrow t \in [0,1]$, as $N \rightarrow \infty$. 

Clearly, $b_{N}R_{N}(t) \rightarrow 0$, as $N \rightarrow \infty$, almost surely. Next, let $h_{N} \coloneqq \max \{ |v|: v \in \mathbf{t}_{N} \}$ be the height of $ \mathbf{t}_{N}$. By Theorem \ref{Theo3}, we have that, as $N \rightarrow \infty$,
\begin{align} \label{eq9}
N^{-1} h_{N} \rightarrow 0,  \hspace*{2mm} \text{a.s.}
\end{align}
\noindent  Then, by Lemma \ref{lemma4} and \eqref{eq9}, for $t \in [0,1]$, $N^{-1}\tilde{m}(N,t) \rightarrow 1-t-\mathbf{1}_{\{0\}}(t)$, as $N \rightarrow \infty$, almost surely. Now, for $t \in (0,1]$, it follows from (\ref{eq12}), Theorem \ref{Theo4} and Lemma \ref{Cor3}, that $b_{N} \sigma_{N}^{\uparrow}(t_{N}) \rightarrow X_{2}(t) + \tilde{X}_{2}(1-t)=2X_{2}(t)$, as $N \rightarrow \infty$, almost surely. Similarly, for $t =0$, $b_{N} \sigma_{N}^{\uparrow}(t_{N}) \rightarrow X_{2}(0) + \tilde{X}_{2}(0)=2X_{2}(0)$, as $N \rightarrow \infty$, almost surely. This leads to the desired contradiction, which concludes our proof.
\end{proof}

To conclude this section, we present a technical estimate that will be useful in the next section for the proof of our main result, Theorem \ref{NewTheo}.

Let $(t_{i}, i \in \mathbb{N})$ be a sequence of independent random uniform variables on $[0,1]$. Assume that they are also independent of $W_{N}^{\uparrow}$, $W_{N}^{\downarrow}$, $H_{N}$, $C_{N}$, $X_{\alpha}$ and $H_{\alpha}$. For $i \geq 1$, set $U_{i,N} \coloneqq u(\lfloor t_{i} N \rfloor)$ such that $U_{i,N}$ is the $\lfloor t_{i} N \rfloor$-th vertex of $\mathbf{t}_{N}$ in lexicographical order (and in particular, $U_{i,N}$ is a uniform random vertex of $\mathbf{t}_{N}$). Recall the definition of the pruning measure $\nu_{N}^{\rm bra}$ from \eqref{eq7}. Recall also that, for $k \in \mathbb{N}$, $\mathbf{U}_{k,N} \rrbracket$ denotes the subtree of $\mathbf{t}_{N}$ spanned by its root and the vertices $\mathbf{U}_{k,N} = (U_{1,N}, \dots, U_{k,N})$. 

\begin{lemma} \label{lemma7} 
Let $\mathbf{t}_{N}$ be an $\alpha$-stable GW-tree, for $\alpha \in (1,2)$, and fix $k \in \mathbb{N}$. Then, for all $\varepsilon>0$, there exists $\beta >0$ such that 
\begin{eqnarray} \label{eq6}
\limsup_{N \rightarrow \infty} \nu_{N}^{\rm bra} ( \{ v \in \llbracket \mathbf{U}_{k,N} \rrbracket: b_{N}(c(v)-1) \leq \beta\})  \leq \varepsilon, \hspace*{2mm} \text{a.s.}
\end{eqnarray}
\end{lemma}

\begin{proof}
Observe that 
\begin{align}
\nu_{N}^{\rm bra} ( \{ v \in \llbracket \mathbf{U}_{k,N} \rrbracket: b_{N}(c(v)-1) \leq \beta\}) \leq \sum_{i=1}^{k} \nu_{N}^{\rm bra} ( \{ v \in \llbracket \mathbf{U}_{i,N} \rrbracket: b_{N}(c(v)-1) \leq \beta\}) 
\end{align}
\noindent Fix $\varepsilon >0$. Then, to establish our claim, it is sufficient to prove that for each $i \in \{1, \dots, k\}$,
\begin{eqnarray}  \label{eq37}
\limsup_{N \rightarrow \infty} \nu_{N}^{\rm bra} ( \{ v \in \llbracket \mathbf{U}_{i,N} \rrbracket: b_{N}(c(v)-1) \leq \beta\})  \leq \varepsilon /k, \hspace*{2mm} \text{a.s.}
\end{eqnarray}

The proof of \eqref{eq37} draws inspiration from ideas presented in the proof of \cite[Theorem 4.1]{Igor2014}. Recall the definition of $R_{N}(t_{i})$ in \eqref{eq41}. Then, note that
\begin{align} \label{eq38}
& \nu_{N}^{\rm bra} ( \{ v \in \llbracket \mathbf{U}_{i,N} \rrbracket: b_{N}(c(v)-1) \leq \beta\}) \nonumber \\
& \quad \quad = b_{N} \sideset{}{^{\uparrow}}\sum_{0 \preccurlyeq k \preccurlyeq \lfloor N t_{i} \rfloor} \Delta W^{\uparrow}_{N}(k+1) \mathbf{1}_{ \left\{ \Delta W^{\uparrow}_{N}(k+1) \leq \beta b_{N}^{-1} \right\}}  \nonumber \\
& \quad \quad  \quad \quad  - b_{N}\left( \Delta W^{\uparrow}_{N}(\lfloor Nt_{i} \rfloor+1) \wedge 0 \right) \mathbf{1}_{ \left\{ \Delta W^{\uparrow}_{N}(\lfloor Nt_{i} \rfloor+1) \leq \beta b_{N}^{-1} \right\}}  \nonumber \\
& \quad \quad = b_{N}\sideset{}{^{\uparrow}}\sum_{0 \preccurlyeq k \prec \lfloor Nt_{i} \rfloor} \left( \bar{x}_{k,\lfloor N t_{i} \rfloor}^{\uparrow}  + \underline{x}_{k,\lfloor N t_{i} \rfloor}^{\uparrow}
 \right) \mathbf{1}_{ \left\{ \Delta W_{N}^{\uparrow}(k+1) \leq \beta b_{N}^{-1} \right\}} +  b_{N} \tilde{R}_{N}(t_{i}),
\end{align}
\noindent where 
\begin{align}
\tilde{R}_{N}(t_{i}) \coloneqq R_{N}(t_{i}) \mathbf{1}_{ \left\{ \Delta W^{\uparrow}_{N}(\lfloor N t_{i} \rfloor+1) \leq \beta b_{N}^{-1} \right\}}.
\end{align}
\noindent From \eqref{eq10} and \eqref{eq11}, we get that
\begin{align} \label{eq32} 
& b_{N}  \sideset{}{^{\uparrow}}\sum_{0 \preccurlyeq k \prec \lfloor Nt_{i} \rfloor} \underline{x}_{k,\lfloor N t_{i} \rfloor}^{\uparrow} \mathbf{1}_{ \left\{ \Delta W_{N}^{\uparrow}(k+1) \leq \beta b_{N}^{-1} \right\}} \nonumber \\
& \quad \quad \quad \leq b_{N}  W_{N}^{\uparrow}(\lfloor N t_{i} \rfloor) - b_{N}  \sideset{}{^{\uparrow}}\sum_{0 \preccurlyeq k \prec \lfloor Nt_{i} \rfloor}  \underline{x}_{k,\lfloor N t_{i} \rfloor}^{\uparrow} \mathbf{1}_{ \left\{ \Delta W_{N}^{\uparrow}(k+1) > \beta b_{N}^{-1} \right\}}
\end{align}
\noindent and
\begin{align} \label{eq33}
& b_{N}  \sideset{}{^{\uparrow}}\sum_{0 \preccurlyeq k \prec \lfloor Nt_{i} \rfloor} \bar{x}_{k,\lfloor N t_{i} \rfloor}^{\uparrow} \mathbf{1}_{ \left\{ \Delta W_{N}^{\uparrow}(k+1) \leq \beta b_{N}^{-1} \right\}} \nonumber \\
& \quad \quad \quad \leq b_{N}  W_{N}^{\downarrow}(\tilde{m}(N,t_{i})) - b_{N} \sideset{}{^{\downarrow}}\sum_{0 \preccurlyeq k \prec \lfloor Nt_{N} \rfloor} \underline{x}_{k,\tilde{m}(N,t_{i})}^{\downarrow} \mathbf{1}_{ \left\{ \Delta W_{N}^{\downarrow}(k+1) > \beta b_{N}^{-1} \right\}},
\end{align}
\noindent where $\tilde{m}(N,t_{i})$ is defined in \eqref{eq31}. 

Note that we can always choose $\beta>0$ small enough such that
\begin{eqnarray}
\limsup_{N \rightarrow \infty} b_{N} \tilde{R}_{N}(t_{i}) < \varepsilon /3k.
\end{eqnarray}
\noindent Then, by \eqref{eq38}, \eqref{eq32} and \eqref{eq33}, to prove (\ref{eq37}), we need to show that the right-hand sides of (\ref{eq32}) and (\ref{eq33}) are both less than $\varepsilon/3k$ for $\beta>0$ small enough. 

First, we prove that the right-hand side of (\ref{eq32}) is less that $\varepsilon/3k$. It is known that $\sum_{0 \preccurlyeq s \preccurlyeq t_{i}} ( I_{\alpha}(s,t_{i}) - X_{\alpha}(s-)) = X_{\alpha}(t_{i})$; see for e.g., \cite[Corollary 3.4]{Igor2014}. Thus, we can assume that $\beta > 0$ has been chosen sufficiently small so that
\begin{align} \label{eq34}
\sum_{0 \preccurlyeq s \preccurlyeq t_{i}} ( I_{\alpha}(s,t_{i}) - X_{\alpha}(s-)) \mathbf{1}_{\{ \Delta_{s} > \beta \}} \geq X_{\alpha}(t_{i}) - \varepsilon /3k.
\end{align}
\noindent Moreover, since $t_{i}$ is a.s.\ a continuity point of $X_{\alpha}$, Theorem \ref{Theo3} implies that, as $N \rightarrow \infty$,
\begin{align} \label{eq35}
b_{N}W_{N}^{\uparrow}(\lfloor N t_{i} \rfloor)  \rightarrow  X_{\alpha}(t_{i}), \quad  \text{a.s.}
\end{align}
\noindent On the other hand, recall that $\{ s \in [0,1]: 0 \preccurlyeq s \preccurlyeq t_{i} \hspace*{2mm} \text{and} \hspace*{2mm} \Delta_{s} > 0  \}$ is a countable set; see \cite[Theorem 4.6]{Du2005}.  In particular, for each fixed $\beta >0$, the set $\{ s \in [0,1]: 0 \preccurlyeq s \preccurlyeq t_{i} \hspace*{2mm} \text{and} \hspace*{2mm} \Delta_{s} > \beta  \}$ is finite. Recall that properties of the Skorohod topology entail that the jumps of $(b_{N}W^{\uparrow}_{N}( \lfloor N t \rfloor), t \in [0,1])$ converge to the jumps of $X_{\alpha}$, together with their locations; see for e.g., \cite[Proposition 2.7 in Chapter VI]{Jacod2003}. Then, as $N \rightarrow \infty$,
\begin{align} \label{eq36}
b_{N} \sideset{}{^{\uparrow}}\sum_{0 \preccurlyeq k \prec \lfloor Nt_{i} \rfloor}  \underline{x}_{k,\lfloor N t_{i} \rfloor}^{\uparrow} \mathbf{1}_{ \left\{ \Delta W_{N}^{\uparrow}(k+1) > \beta b_{N}^{-1} \right\}} \rightarrow \sum_{0 \preccurlyeq s \preccurlyeq t_{i}} ( I_{\alpha}(s,t_{i}) - X_{\alpha}(s-)) \mathbf{1}_{\{ \Delta_{s} > \beta \}};
\end{align}
\noindent (recall that $t_{i}$ is a.s.\ a continuity point of $X_{\alpha}$ and thus $\Delta_{t_{i}}=0$). Therefore, combining \eqref{eq34}, \eqref{eq35} and \eqref{eq36}, we deduce that for sufficiently large $N$, the right-hand side of (\ref{eq32}) is less than $\varepsilon/3k$.  

The proof that the right-hand side of (\ref{eq33}) is less than $\varepsilon/3k$  for sufficiently small $\beta>0$ is similar. It follows from \eqref{eq9}, \eqref{eq35} and Lemma \ref{lemma4} that, as $N \rightarrow \infty$,
\begin{align} \label{eq39}
\tilde{m}(N,t_{i}) \rightarrow 1-t_{i}, \quad  \text{a.s.}
\end{align}
\noindent Note that $1-t_{i}$ is a.s.\ a continuity point of $\tilde{X}_{\alpha}$. Thus, Lemma \ref{Cor3} implies that, as $N \rightarrow \infty$,
\begin{align} \label{eq40}
b_{N}W_{N}^{\uparrow}(\tilde{m}(N,t_{i}))  \rightarrow  \tilde{X}_{\alpha}(1-t_{i}), \quad  \text{a.s.}
\end{align}
\noindent Therefore, using \eqref{eq39} and \eqref{eq40}, one can readily deduce that for sufficiently large $N$, the right-hand side of (\ref{eq33}) is less than $\varepsilon/3k$, in the same way we proved that the right-hand side of (\ref{eq32}) is less than $\varepsilon/3k$. 

This concludes the proof of Lemma \ref{lemma7}.
\end{proof}

\section{Proof of Theorem \ref{NewTheo}}  \label{Sec15}

In this section, we prove Theorem \ref{NewTheo}. Throughout the section, we let $(a_{N})_{N \in \mathbb{N}}$ and $(b_{N})_{N \in \mathbb{N}}$ be the sequences of positive real numbers as in  Theorem \ref{Theo3} or equivalently, defined in (\ref{eq23}). The proof of Theorem~\ref{NewTheo} relies on the following result: the LWV-convergence of a properly rescaled $\alpha$-stable GW-tree equipped with its pruning measure to the $\alpha$-stable L\'evy tree equipped with its corresponding pruning measure.

\begin{proposition} \label{Prop1}
For $N \in \mathbb{N}$ and $\ast \in \{ {\rm ske}, {\rm bra}, {\rm mix}\}$, let $\mathbf{t}_{N}^{\ast} = (\mathbf{t}_{N}, a_{N} \cdot r_{N}^{{\rm gr}}, \rho_{N}, \mu_{N}^{\rm nod}, \nu_{N}^{\ast})$ be the bi-measure $\mathbb{R}$-tree associated with an $\alpha$-stable ${\rm GW}$-tree. Similarly, let $\mathcal{T}_{\alpha}^{\ast} = (\mathcal{T}_{\alpha}, r_{\alpha}, \rho_{\alpha}, \mu_{\alpha}, \nu^{\ast}_{\alpha})$ be the bi-measure $\mathbb{R}$-tree associated with an $\alpha$-stable L\'evy tree. Then, $\mathbf{t}_{N}^{\ast} \rightarrow \mathcal{T}^{\ast}_{\alpha}$, in distribution, as $N \rightarrow \infty$, with respect to the LWV-topology.
\end{proposition}

We have now all the ingredients to prove Theorem \ref{NewTheo}.
\begin{proof}[Proof of Theorem \ref{NewTheo}]
The result follows from Proposition \ref{Prop1} and \cite[Lemma 3.1, Lemma 3.3 and Theorem 3.6]{LohrVoisinWinter2015} (i.e., the fact that the pruning process is a strong Markov $\mathbb{H}_{f,  \sigma}$-valued process, with c\`adl\`ag paths, and whose distribution is weakly continuous in the initial value).
\end{proof}

The rest of this section is dedicated to the proof of Proposition \ref{Prop1}. 

\begin{proof}[Proof of Proposition \ref{Prop1}]
By the Skorokhod representation theorem, we work in a probability space where the convergences in Theorem \ref{Theo3}, Lemma \ref{Cor3} and Theorem \ref{Theo4} hold almost surely. Theorem \ref{Theo3} implies the convergence of the rescaled $\alpha$-stable GW-tree to the $\alpha$-stable L\'evy tree $\mathcal{T}_{\alpha}$ stated in (\ref{eq1}) holds a.s.\ with respect to the Gromov-Hausdorff-weak topology; see e.g.\ \cite[Proposition 2.9]{ADH2014}. In particular, the same convergence holds a.s.\ with respect to the Gromov-weak topology.

Recall that the pruning measures $\nu^{\rm ske}_{N}$ and $\nu_{\alpha}^{\rm ske}$ are the length measures on the rescaled  $\alpha$-stable GW-tree $a_{N} \cdot \mathbf{t}_{N} = (\mathbf{t}_{N}, a_{N} \cdot r_{N}^{\text{gr}}, \rho_{N}, \mu_{N}^{\rm nod})$ and $\alpha$-stable L\'evy tree $\mathcal{T}_{\alpha} = (\mathcal{T}_{\alpha}, r_{\alpha}, \rho_{\alpha}, \mu_{\alpha})$, respectively. According to the definition before \cite[Example 2.24]{LohrVoisinWinter2015}, the length measures  $\nu^{\rm ske}_{N}$ and $\nu_{\alpha}^{\rm ske}$ depend continuously on the distances of the corresponding  underlying trees. Then \cite[Proposition 2.25]{LohrVoisinWinter2015} and the Gromov-weak convergence of $a_{N} \cdot \mathbf{t}_{N}$ to $\mathcal{T}_{\alpha}$ imply that, as $N \rightarrow \infty$,
\begin{eqnarray} \label{eq22}
\mathbf{t}_{N}^{\rm ske} \rightarrow \mathcal{T}_{\alpha}^{\rm ske}, \hspace*{4mm}  \text{a.s.},
\end{eqnarray}

\noindent with respect to the LWV-topology; this convergence has been already established in \cite[Examples 4.2 and 4.3]{LohrVoisinWinter2015} for a particular offspring distribution that lies in the domain of attraction of an $\alpha$-stable law. Then, to conclude with the proof of Proposition \ref{Prop1}, it is enough to show that,
as $N \rightarrow \infty$,
\begin{eqnarray}  \label{eq19}
\mathbf{t}_{N}^{\rm bra} \rightarrow  \mathcal{T}_{\alpha}^{\rm bra}, \hspace*{4mm}  \text{a.s.},
\end{eqnarray}

\noindent with respect to the LWV-topology. More precisely, \cite[Lemma 2.28]{LohrVoisinWinter2015}, \eqref{eq22} and \eqref{eq19} imply that $\mathbf{t}_{N}^{\rm mix} \rightarrow \mathcal{T}_{\alpha}^{\rm mix}$, as $N \rightarrow \infty$, a.s., with respect to the LWV-topology.

Note that $\mu_{N}^{{ \rm nod}}(\mathbf{t}_{N}) =1 \rightarrow \mu_{\alpha}(\mathcal{T}_{\alpha}) = 1$, as $N \rightarrow \infty$. By the definition of the LWV-topology (Definition \ref{Def2}), (\ref{eq19}) holds if for any $k \in \mathbb{N}$,
\begin{eqnarray}  \label{eq20}
(\llbracket \mathbf{U}_{k,N} \rrbracket, a_{N} \cdot r_{N}^{\rm gr}, \rho_{N},  \mathbf{U}_{k,N}, \nu_{N}^{\rm bra}) \rightarrow (\llbracket \mathbf{U}_{k} \rrbracket, r_{\alpha}, \rho_{\alpha}, \mathbf{U}_{k}, \nu_{\alpha}^{\rm bra}), \hspace*{4mm} \text{as} \hspace*{2mm} N \rightarrow \infty,
\end{eqnarray}

\noindent a.s., with respect to the Gromov-weak topology, where $\mathbf{U}_{k,N} = (U_{1,N}, \dots, U_{k,N})$ is a sequence of $k$ i.i.d.\ vertices with common distribution $\mu_{N}^{{\rm nod}}$ and $\mathbf{U}_{k} = (U_{1}, \dots, U_{k})$ is a sequence of $k$ i.i.d.\ points with common distribution $\mu_{\alpha}$; we tacitly understand that the pruning measure $\nu_{N}^{\rm bra}$ (resp.\ $ \nu_{\alpha}^{\rm bra}$) and the metric $a_{N} \cdot r_{N}^{\rm gr}$ (resp.\  $r_{\alpha})$ are restricted to the appropriate space.

We deal with the cases $\alpha =2$ and $\alpha \in (1,2)$ separately.

\paragraph{The Brownian case, $\alpha =2$.} In this case $\nu_{2}^{\rm bra} = \lambda_{2}$ is the length measure of the Brownian CRT.
 Suppose that we have shown that, for any $k \in \mathbb{N}$,
\begin{eqnarray} \label{eq21}
d_{\text{Pr}}^{a_{N} \cdot \mathbf{t}_{N}}(\nu_{N}^{{ \rm bra}} \upharpoonright_{\llbracket \mathbf{U}_{k,N} \rrbracket}, \nu_{N}^{{\rm ske}} \upharpoonright_{\llbracket \mathbf{U}_{k,N} \rrbracket}) \rightarrow 0, \hspace*{4mm} \text{as} \hspace*{2mm} N \rightarrow \infty,
\end{eqnarray}

\noindent almost surely. Therefore, for $\alpha =2$, (\ref{eq20}) follows from (\ref{eq22}) and \cite[Lemma 2.30]{LohrVoisinWinter2015}.

Let us now prove (\ref{eq21}). Denote by $B_{N}(v, \delta) \coloneqq \inf\{ u \in \mathbf{t}_{N}: a_{N} \cdot r_{N}^{\rm gr}(v, u) \leq \delta \}$ the closed ball in $a_{N} \cdot \mathbf{t}_{N}$ centred at $v \in \mathbf{t}_{N}$ and with radius $\delta \geq 0$. First, we consider the case $k =1$. For $\delta \geq 0$, we define
\begin{eqnarray}
F_{N}^{\rm bra}(\delta) \coloneqq \nu_{N}^{\rm bra}( B_{N}(\rho_{N}, \delta) \cap [\rho_{N}, U_{1,N}]) \hspace*{2mm} \text{and} \hspace*{2mm} F_{N}^{\rm ske}(\delta) \coloneqq \nu_{N}^{\rm ske}( B_{N}(\rho_{N}, \delta)  \cap [\rho_{N}, U_{1,N}]).
\end{eqnarray}

\noindent For $\ast \in \{{\rm bra}, {\rm ske}\}$, note that $F_{N}^{\ast}(\cdot)$ determines the measure $\nu_{N}^{\ast} \upharpoonright_{\llbracket \mathbf{U}_{k,N} \rrbracket}$ in the same way a distribution function determines a finite measure in $\mathbb{R}_{+}$. On the other hand, observe that
\begin{eqnarray}
 \sup_{\delta \geq 0} \left | F_{N}^{\rm bra}(\delta) -  F_{N}^{\rm ske}(\delta) \right| \leq \sup_{t \in [0,1]}  \left | b_{N} \sigma_{N}^{\uparrow}(t) -  a_{N} H_{N}((N-1)t) \right|,
\end{eqnarray}

\noindent where $(H_{N}((N-1)t), t \in [0,1])$ is the (normalize) height process associated to $\mathbf{t}_{N}$, and $(\sigma_{N}^{\uparrow}(t), t \in [0,1])$ is the process defined in (\ref{eq24}). By Theorem \ref{Theo3} and Theorem \ref{Theo4}, we known that both processes $(a_{N} H_{N}((N-1)t),  t \in [0,1])$ and $(b_{N}  \sigma_{N}^{\uparrow}(t),  t \in [0,1])$ converge to $2X_{2}$. Furthermore, these convergences hold uniformly on $[0,1]$. Thus, by the triangle inequality, we deduce that
\begin{eqnarray}
 \sup_{\delta \geq 0} \left | F_{N}^{\rm bra}(\delta) -  F_{N}^{\rm ske}(\delta) \right|  \rightarrow 0, \hspace*{4mm} \text{as} \hspace*{2mm} N \rightarrow \infty,
\end{eqnarray}
\noindent almost surely, which shows (\ref{eq21}) for $k =1$.

Suppose now that $k \geq 2$. For $1 < n \leq k$, we let $b_{n,N}$ be the branching point of $\mathbf{t}_{N}$ between $U_{n,N}$ and $\llbracket \mathbf{U}_{n-1,N} \rrbracket$, i.e., $b_{n,N} \in \llbracket \mathbf{U}_{n-1,N} \rrbracket$ such that $[\rho_{N}, U_{n,N}] \cap \llbracket \mathbf{U}_{n-1,N} \rrbracket = [\rho_{N}, b_{n,N}]$. Define, for $\delta \geq 0$ and $\ast \in \{{\rm bra}, {\rm ske}\}$,
\begin{eqnarray}
F_{N,1}^{\ast}(\delta) \coloneqq F_{N}^{\ast}(\delta) \hspace*{2mm} \text{and} \hspace*{2mm} F_{N,n}^{\ast}(\delta) \coloneqq \nu_{N}^{\ast}( B(b_{n,N}, \delta) \cap ]b_{n,N}, U_{n,N}]),
\end{eqnarray}

\noindent for $1 < n \leq k$. For $\ast \in \{{\rm bra}, {\rm ske}\}$, conditional on $\llbracket \mathbf{U}_{k,N} \rrbracket$, the vector $(F_{N,1}^{\ast}(\cdot), \dots, F_{N,k}^{\ast}(\cdot))$ determines the measure $\nu_{N}^{\ast} \upharpoonright_{\llbracket \mathbf{U}_{k,N} \rrbracket}$ in the same way as before. Then by a similar argument as the case $k=1$, one can show that
\begin{eqnarray}
\max_{1 \leq n \leq k} \sup_{\delta \geq 0} \left | F_{N,n}^{\rm bra}(\delta) -  F_{N,n}^{\rm ske}(\delta) \right| \rightarrow 0, \hspace*{4mm} \text{as} \hspace*{2mm} N \rightarrow \infty,
\end{eqnarray}
\noindent almost surely, which finishes the proof of (\ref{eq21}).

\paragraph{The stable case, $\alpha \in (1,2)$.} Let $p_{\alpha}$ be the canonical projection from $[0,1]$ to $\mathcal{T}_{\alpha}= [0,1] \setminus \sim_{\alpha}$, introduced in Section \ref{Sec13}, and let $(t_{i}, i \in \mathbb{N})$ be a sequence of independent uniform random variables on $[0,1]$. Assume they are also independent of $W_{N}^{\uparrow}$, $W_{N}^{\downarrow}$, $H_{N}$, $C_{N}$, $X_{\alpha}$ and $H_{\alpha}$. For $i \geq 1$, set $U_{i} = p_{\alpha}(t_{i})$ and observe that $(U_{i}, i \in \mathbb{N})$ is a sequence of i.i.d.\ random variables with ditribution $\mu_{\alpha}$. Define the projection $p_{N}$ from $\{0, 1/N, \dots, (N-1)/N\}$ onto the set of vertices of $\mathbf{t}_{N}$ by letting $p_{N}(i/N) = u_{N}(i)$, for $i \in \{0, 1, \dots, N-1\}$, where $u_{N}(i)$ is the $i$-th vertex of $\mathbf{t}_{N}$ in lexicographical order. Note that $(\lfloor t_{i} N \rfloor, i \in \mathbb{N})$ is a sequence of i.i.d.\ uniform integers on $\{0, 1, \dots, N-1\}$. For $i \geq 1$, set $t_{i}^{N} = \lfloor t_{i} N \rfloor/N$ and $U_{i,N} = p_{N}(t_{i}^{N}) = u_{N}(\lfloor t_{i} N \rfloor)$. Then $(U_{i, N}, i \in \mathbb{N})$ is a sequence of i.i.d.\ random variables with distribution $\mu_{N}^{{\rm nod}}$. Note that the sequence $(t_{i}^{N}, i \in \mathbb{N})$ converges to $(t_{i}, i \in \mathbb{N})$, almost surely. Furthermore, these sequences are independent of $a_{N} \cdot \mathbf{t}_{N}$ and $\mathcal{T}_{\alpha}$.

Let us prove (\ref{eq20}). Fix $k \in \mathbb{N}$. Recall that $\preccurlyeq$ denotes the genealogical order in $\mathbf{t}_{N}$ or $\mathcal{T}_{\alpha}$. (Here, we will consider the genealogical order in $\mathbf{t}_{N}$ with respect to the lexicographical order as defined in \eqref{eq8}.) Recall also that the support of $\mu_{\alpha}$ is the set of leaves of $\mathcal{T}_{\alpha}$ (\cite[Theorem 4.6]{Du2005}). Then, $U_{1}, \dots, U_{k}$ are leaves of $\mathcal{T}_{\alpha}$. Note that, for $i \in \{1, \dots, k\}$, $\{ s \in [0,1]: 0 \preccurlyeq s \preccurlyeq  t_{i} \hspace*{2mm} \text{and} \hspace*{2mm} \Delta_{s} > 0  \}$ is a countable set; see \cite[Theorem 4.6]{Du2005}. Moreover, for each fixed $\beta >0$, the set $\{ s \in [0,1]: 0 \preccurlyeq s \preccurlyeq t_{i} \hspace*{2mm} \text{and} \hspace*{2mm} \Delta_{s} > \beta  \}$ is finite. In particular, the set  $\{ s \in [0,1]: p_{\alpha}(s) \in \llbracket \mathbf{U}_{k} \rrbracket \hspace*{2mm} \text{and} \hspace*{2mm} \Delta_{s} > \beta\}$ is also finite. Here, $p_{\alpha}(s) \in \llbracket \mathbf{U}_{k} \rrbracket$ if there exists $i \in \{1, \dots, k\}$ such that $s \preccurlyeq  t_{i}$. Then, we write $\{ s \in [0,1]: p_{\alpha}(s) \in \llbracket \mathbf{U}_{k} \rrbracket \hspace*{2mm} \text{and} \hspace*{2mm} \Delta_{s} > \beta\} = \{s_{0}, \dots, s_{n_{\beta}}\}$ for some $n_{\beta} \in \mathbb{N}$ such that $p_{\alpha}(s_{i}) \in \llbracket \mathbf{U}_{k} \rrbracket$, for $i=1,\dots, n_{\beta}$. 

Properties of the Skorokhod topology entail that the jumps of $(W_{N}^{\uparrow}( \lfloor N t \rfloor ), t \in [0,1])$ converge to the jumps of $(X_{\alpha}(t), t \in [0,1])$, together with their locations (see for e.g., \cite[Proposition 2.7 in Chapter VI]{Jacod2003}). Hence it follows that for every $s \in \{s_{0}, \dots, s_{n_{\beta}}\}$ one can find $i_{N}(s) \in \{0, \dots, N-1\}$ such that the following two conditions hold for $N$ sufficiently large
\begin{enumerate}[label=(\textbf{\alph*})]
\item $\displaystyle \frac{i_{N}(s)}{N} \rightarrow s, \hspace*{2mm}  u_{N}(i_{N}(s)) \in \llbracket \mathbf{U}_{k, N} \rrbracket, \hspace*{2mm}  b_{N} \Delta W_{N}^{\uparrow}( i_{N}(s)+1 ) \rightarrow \Delta_{s}, \hspace*{2mm} \text{as} \hspace*{2mm} N \rightarrow \infty$, and \label{ProT1}
\item $\{ i_{N}(s_{0}), \cdots, i_{N}(s_{n_{\beta}}) \} = \{ r \in \{0, \dots, N-1\}:  u_{N}(r) \in \llbracket \mathbf{U}_{k, N} \rrbracket \hspace*{2mm} \text{and} \hspace*{2mm} b_{N}(c(u_{N}(r)) -1) > \beta  \}$. \label{ProT2}
\end{enumerate}
\noindent Recall that $c(u_{N}(m))-1 = \Delta W_{N}^{\uparrow}(m+1)$, for $m =0, \dots, N-1$, is the number of children minus one of the $m$-th vertex of $\mathbf{t}_{N}$ in lexicographical order.

For $i = 0, \dots, n_{\beta}$, we write $u_{i}^{N} =  p_{N}(i_{N}(s_{i})/N)$ and $u_{i} =  p_{\alpha}(s_{i})$. Define the finite measures $\vartheta_{N}^{n_{\beta}}(\cdot) \coloneqq b_{N} \sum_{i=0}^{n_{\beta}} (c(u_{i}^{N})-1) \delta_{u_{i}^{N}}(\cdot)$ and $\vartheta_{\alpha}^{n_{\beta}}(\cdot) \coloneqq \sum_{i=0}^{n_{\beta}} \Delta_{s_{i}} \delta_{u_{i}}(\cdot)$, on $\llbracket \mathbf{U}_{k,N} \rrbracket \subset \mathbf{t}_{n}$ and $\llbracket \mathbf{U}_{k} \rrbracket \subset \mathcal{T}_{\alpha}$, respectively. Suppose that we have shown that, for each fixed $\beta >0$, we have that, as $N \rightarrow \infty$,
\begin{eqnarray}  \label{eq25}
(\llbracket \mathbf{U}_{k,N} \rrbracket, a_{N} \cdot r_{N}^{{ \rm gr}}, \rho_{N}, \mathbf{U}_{k,N}, \vartheta_{N}^{n_{\beta}}) \rightarrow (\llbracket \mathbf{U}_{k} \rrbracket, r_{\alpha}, \rho_{\alpha}, \mathbf{U}_{k},  \vartheta_{\alpha}^{n_{\beta}}), \hspace*{2mm} \text{a.s.},
\end{eqnarray}
\noindent with respect to the Gromov-weak topology. 

Now, fix some $\varepsilon > 0$. Recall the definition of the Prohorov and pointed Gromov-Prohorov metrics given in \eqref{Proh1} and \eqref{Proh2}, respectively. First, note that we can choose $\beta >0$ small enough such that
\begin{eqnarray}   \label{eq26}
d_{\text{Pr}}^{\mathcal{T}_{\alpha}}(\nu_{\alpha}^{{ \rm bra}} \upharpoonright_{\llbracket \mathbf{U}_{k} \rrbracket}, \vartheta_{\alpha}^{n_{\beta}})  \leq \varepsilon /2.
\end{eqnarray}
\noindent On the other hand, by \eqref{eq6} in Lemma \ref{lemma7}, we can also choose  $\beta$ such that 
\begin{eqnarray}  \label{eq27}
\limsup_{N \rightarrow \infty}  d_{\text{Pr}}^{a_{N} \cdot \mathbf{t}_{N}}(\nu_{N}^{{ \rm bra}} \upharpoonright_{ \llbracket \mathbf{U}_{k,N} \rrbracket}, \vartheta_{N}^{n_{\beta}}) \leq \varepsilon /2.
\end{eqnarray}

\noindent Therefore, the triangle inequality together with (\ref{eq25}), (\ref{eq26}) and (\ref{eq27}) implies that
\begin{eqnarray}
\limsup_{N \rightarrow \infty}   d_{\text{pGP}}( (\llbracket \mathbf{U}_{k,N} \rrbracket, a_{N} \cdot r_{N}^{\rm gr}, \rho_{N},  \mathbf{U}_{k,N}, \nu_{N}^{\rm bra}),  (\llbracket \mathbf{U}_{k} \rrbracket, r_{\alpha}, \rho_{\alpha}, \mathbf{U}_{k}, \nu_{\alpha}^{\rm bra})) \leq \varepsilon, 
\end{eqnarray}

\noindent or equivalently, (\ref{eq20}). 

It only remains to prove (\ref{eq25}). We start by building a correspondence between $\mathcal{X}_{N} \coloneqq {\rm supp}(\vartheta_{N}^{n_{\beta}}) \cup \{\rho_{N}, U_{1,N}, \dots, U_{k,N} \}$ and $\mathcal{X} \coloneqq {\rm supp}(\vartheta_{\alpha}^{n_{\beta}}) \cup \{\rho_{\alpha}, U_{1}, \dots,U_{k} \}$ by letting
\begin{align}
\mathcal{R}_{N} \coloneqq \{ (u_{i}^{N}, u_{i}): \hspace*{1mm} u_{i}^{N} =  p_{N}(i_{N}(s_{i})/N) \hspace*{1mm} \text{and} \hspace*{1mm} u_{i} =  p_{\alpha}(s_{i}) \hspace*{1mm} \text{for} \hspace*{1mm}  i = 0, \dots, n_{\beta} \} \cup \mathcal{P}_{k,N},
\end{align}
\noindent where $\mathcal{P}_{k,N} \coloneqq \{ (\rho_{N}, \rho_{\alpha}), (U_{1,N}, U_{1}),  \dots, (U_{k,N}, U_{k})\}$. Recall that $\rho_{N} = p_{N}(0)$, $\rho_{\alpha} = p_{\alpha}(0)$, $U_{i,N}= p_{N}(t_{i}^{N})$ and $U_{i} = p_{\alpha}(t_{i})$, for $i=1, \dots, k$. The distortion of $\mathcal{R}_{N}$ is defined by
\begin{eqnarray}
{\rm dist }( \mathcal{R}_{N}) = \sup \{ | a_{N} \cdot r_{N}(x_{1},y_{1}) - r_{\alpha}(x_{2},y_{2})| : (x_{1}, x_{2}), (y_{1}, y_{2}) \in \mathcal{R}_{N} \}.
\end{eqnarray}

\noindent Let $E = \mathcal{X}_{N} \uplus \mathcal{X}$ be the disjoint union of $\mathcal{X}_{N}$ and $\mathcal{X}$. Define a metric $r_{E}$ on $E$ by letting
\begin{eqnarray}
r_{E}(x,y) = \left\{ \begin{array}{ll}
              a_{N} \cdot r_{N}(x,y) & \mbox{if} \, \, \, \, x,y \in \mathcal{X}_{N}, \\
             r_{\alpha}(x,y)  & \mbox{if} \, \, \, \,  x,y \in \mathcal{X},  \\
             \inf \{ a_{N} \cdot r_{N}(x,x^{\prime}) + r_{\alpha}(y,y^{\prime}) + \frac{1}{2} {\rm dist }( \mathcal{R}_{N}): (x^{\prime}, y^{\prime}) \in \mathcal{R}_{N} \} + N^{-1}\mathbf{1}_{\{{\rm dist }( \mathcal{R}_{N})=0 \}} & \mbox{if} \, \, \, \,  x \in \mathcal{X}_{N}, y \in \mathcal{X},  \\
              \end{array}
    \right.
 \hspace*{4mm}
\end{eqnarray}
\noindent for $x,y \in E$. We leave it to the reader to check that $r^{E}$ is, indeed, a metric; the extra term $N^{-1}\mathbf{1}_{\{{\rm dist }( \mathcal{R}_{N})=0 \}}$ is to avoid having a pseudo-metric when ${\rm dist }( \mathcal{R}_{N})=0$. In particular, $r_{E}(x,y) = \frac{1}{2} {\rm dist }( \mathcal{R}_{N}) + N^{-1}\mathbf{1}_{\{{\rm dist }( \mathcal{R}_{N})=0 \}}$, for $(x,y) \in \mathcal{R}_{N}$. On the other hand, Theorem \ref{Theo3} implies that
\begin{eqnarray}
{\rm dist }( \mathcal{R}_{N}) \leq 4 \sup_{t \in [0,1]} \left | a_{N} H_{N}((N-1)t) - H_{\alpha}(t) \right| \rightarrow 0, \hspace*{2mm} \text{as} \hspace*{2mm} N \rightarrow \infty, \hspace*{2mm} \text{a.s.} 
\end{eqnarray}
\noindent Hence the above observation together with \ref{ProT1} and \ref{ProT2} shows that, as $N \rightarrow \infty$,
\begin{eqnarray}
r_{E}(\rho_{N}, \rho_{\alpha}) \rightarrow 0, \hspace*{2mm} r_{E}(U_{i,N},U_{i}) \rightarrow 0, \hspace*{2mm} \text{for} \hspace*{2mm} i =1,\dots, k,
\end{eqnarray}
\noindent and
\begin{eqnarray}
d_{\rm Pr}^{(E, r_{E})}(\vartheta_{N}^{n_{\beta}}, \vartheta_{\alpha}^{n_{\beta}} ) \rightarrow 0,
\end{eqnarray}
\noindent almost surely. Indeed, the third convergence is a consequence of \cite[Example 2.1 in Chapter 1]{Bi1999} and \cite[Theorem 3.1 in Chapter 3]{Et1986}. Therefore, (\ref{eq25}) follows from the characterisation of convergence in the (pointed) Gromov-weak topology via the pointed Gromov-Prokhorov metric (\cite[Proposition 2.6]{LohrVoisinWinter2015}). 

This concludes the proof of Theorem \ref{NewTheo}.
\end{proof}

\begin{appendices}
\section{Continuity of some functionals of excursions} \label{Append1}

We write $\mathbb{D}([0,1], \mathbb{R})$ for the space of real-valued c\`adl\`ag functions on $[0,1]$ equipped with the Skorohod ${\rm J}_{1}$ topology; see e.g., \cite[Chapter 3]{Bi1999} or \cite[Chapter VI]{Jacod2003}. If $g \in \mathbb{D}([0,1], \mathbb{R})$, we set $\Delta g(t) \coloneqq g(t) - g(t-)$, for $t \in [0,1]$, with the convention $g(0) = g(0-)$. Let $J(g) = \{ t \in [0,1]: |\Delta g(t)| >0 \}$ be the set of all jump times of $g$. Observe that $J(g) \subseteq (0,1)$ and recall that $J(g)$ is at most countable. 

Through this section, we fix a function $f \in  \mathbb{D}([0,1], \mathbb{R})$ such that $f(0) =f(1) = f(1-) =0$ (with the convention $f(0-)= 0$), $f(t) > 0$ and $\Delta f(t) \geq 0$ for every $t \in (0,1)$, and satisfying the following properties: 

\begin{enumerate}[label=(\textbf{H.\arabic*})]
\item If $0 \leq s \leq t \leq 1$, there exists at most one value $r \in (s,t)$ such that $f(r) = \inf_{u \in [s,t]} f(u)$ (we say that local minima of $f$ are distinct); \label{H1}
\item If $t \in (0,1)$ is such that $\Delta f(t) > 0$, then $\inf_{u \in [t,t+\varepsilon]} f(u) < f(t)$ for all $\varepsilon \in (0, 1-t]$; \label{H2}
\item If $t \in (0,1)$ is such that $\Delta f(t) > 0$, then $\inf_{u \in [t-\varepsilon,t]} f(u) < f(t-)$ for all $\varepsilon \in (0, t]$; \label{H3}
\item Suppose that $f$ attains a local minimum at $t \in (0,1)$ (in particular, $\Delta f(t) = 0$ by  \ref{H3}). Let $s = \sup \{r \in [0,t]: f(r) < f(t) \}$. Then $\Delta f(s) > 0$ and $f(s-) <f(t)$. Note that then $f(s) > f(t)$ by \ref{H2}. \label{H4}
\end{enumerate}

For every $t \in [0,1]$, we set
\begin{align}
\ell(t) \coloneqq \inf\{ s \in (0, 1-t]: f(t+s) - f(t) < 0 \} 
\end{align}
\noindent and 
\begin{align}
\ell(t-) \coloneqq \inf\{ s \in (0, 1-t]: f(t+s) - f(t-) < 0 \},
\end{align}
\noindent with the convention $\ell(1) = \ell(1-) =1$. 
Since $f(1-)=f(1) = 0$ and $f(s) > 0$ for every $s \in (0,1)$, we have that $\ell(t), \ell(t-) >0$, for all $t \in [0,1]$. 

\begin{lemma}  \label{lemma1}
We have that $\ell(0) = \ell(0-) =1$. Moreover, for all $t \in (0,1) \setminus J(f)$, we have that $\ell(t) = \ell(t-) = 0$.
\end{lemma}

\begin{proof}
Clearly, $\ell(t) = \ell(t-)$ for all $t \in [0,1] \setminus J(f)$. Moreover, $\ell(0) =1$, which proves our first claim. Next, we prove the second claim by contradiction. Suppose $\ell(t) \in (0,1-t]$. Since $\Delta f(s) \geq 0$ for every $s \in (0,1)$, it must be that $t+\ell(t) \in (0,1) \setminus J(f)$, meaning $t+\ell(t)$ is a continuity point of $f$. 
As $[0,1]\setminus J(f)$ is dense in $[0,1]$, for any $\varepsilon > 0$,  there exists $t^{\prime} \in (0, \ell(t))$ such that $f(t+t^{\prime}) < f(t+\ell(t)) + \varepsilon$.  By choosing $\varepsilon = f(t) - f(t+\ell(t)) > 0$ and $t^{\prime}$ accordingly, we find $f(t+t^{\prime}) < f(t)$, which establishes the contradiction.
\end{proof}

\begin{lemma} \label{lemma2}
For $t \in J(f)$, we have that $\ell(t) = 0$ and that $\ell(t-) =  \tilde{\ell}(t-)$, where 
\begin{align}
\tilde{\ell}(t-) & \coloneqq \inf\{ s \in (0, 1-t]: f(t+s) = f(t-) \}  = \inf\{ s \in (t, 1]: f(s) = f(t-) \}-t.
\end{align}
\end{lemma}

\begin{proof}
Recall that $J(f) \subseteq (0,1)$. Note that $\ell(t) = 0$ is a consequence of \ref{H2}. Next, observe that, for $t \in J(f)$,
\begin{align} \label{eq4}
f(t+\tilde{\ell}(t-)) = f(t-) < \inf_{s \in [0, \tilde{\ell}(t-) -\varepsilon]} f(t+s), \quad \text{for all} \, \, \varepsilon \in (0, \tilde{\ell}(t-)]
\end{align}
\noindent On the other hand, we also have that, for $t \in J(f)$,
\begin{align} \label{eq5}
\inf_{s \in [0, \varepsilon]} f(t + \tilde{\ell}(t-) +s)< f(t+\tilde{\ell}(t-)) = f(t-), \quad \text{for all} \, \, \varepsilon \in (0, 1-t -\tilde{\ell}(t-) ].
\end{align}
\noindent To see this last bound, note that otherwise $t + \tilde{\ell}(t-)$ would be a time of a local minimum of $f$, which would contradict \ref{H4}. Then, it follows from \eqref{eq4} and \eqref{eq5} that $\ell(t-) =  \tilde{\ell}(t-)$.
\end{proof}

For $a \in [0,1]$, we write $\mathbb{D}([0,a], [0,1])$ for the space of real-valued c\`adl\`ag functions from $[0,a]$ to $[0,1]$ equipped with the Skorohod ${\rm J}_{1}$ topology. 

\begin{lemma} \label{lemma3}
Let $(f_{n})_{n\in\mathbb{N}}$ be a sequence of functions in $\mathbb{D}([0,1], \mathbb{R})$ that converges to  $f \in \mathbb{D}([0,1], \mathbb{R})$ as above. Let $(\theta_{n})_{n\in\mathbb{N}}$ be a sequence of functions in $\mathbb{D}([0,1], [0,1])$ such that 
\begin{align} \label{eq28}
\sup_{s \in [0,1]} | \theta_{n}(s) - \theta(s)| \rightarrow 0, \hspace*{4mm} \text{as} \hspace*{2mm}  n\rightarrow \infty,
\end{align}
\noindent where $\theta(s) \coloneqq (t+s) \wedge 1$, for $s,t \in [0,1]$.
\noindent Then,
\begin{align} \label{eq2}
(f_{n}(\theta_{n}(s)), s \in [0,1])  \rightarrow (f(\theta(s)), s \in [0,1]), \quad \text{in} \hspace*{2mm}  \mathbb{D}([0,1], \mathbb{R}), \hspace*{2mm} \text{as} \hspace*{2mm}  n\rightarrow \infty.
\end{align}

Let $(t_{n})_{n \in \mathbb{N}}$ and $(r_{n})_{n \in \mathbb{N}}$ be any two sequences of real numbers in $[0,1]$ such that $t_{n} \rightarrow t \in [0,1]$ and $r_{n} \downarrow 0 \in [0,1]$, as $n\rightarrow \infty$. For $n \in \mathbb{N}$, define
\begin{align}
\ell_{n}(t_{n}) \coloneqq \inf  \{ s \in [r_{n}, 1]: f_{n}(\theta_{n}(s)) - f_{n}(t_{n})  <0   \}.
\end{align}
\noindent Then, 
\begin{enumerate}[label=(\textbf{\roman*})]
\item If $f_{n}(t_{n}) \rightarrow f(t-)$ as  $n\rightarrow \infty$, then 
$\ell_{n}(t_{n}) \rightarrow \ell(t-)$; \label{lemma3C1}
\item  If $f_{n}(t_{n}) \rightarrow f(t)$ as  $n\rightarrow \infty$, then 
$\ell_{n}(t_{n}) \rightarrow \ell(t)$.  \label{lemma3C2}
\end{enumerate}
\end{lemma}

\begin{proof}
First, we prove \eqref{eq2}. It follows from \cite[Theorem 3.1]{Whitt1980} and \eqref{eq28} that $(f_{n}(\theta_{n}(s)), s \in [0,1-t])_{n \in \mathbb{N}}$ converges to $(f(\theta(s)), s \in [0,1-t])$ in $\mathbb{D}([0,1-t], [0,1])$. Then, there exists a strictly increasing continuous function $\lambda_{1}:[0,1-t] \rightarrow [0,1-t]$ such that $\lambda_{1}(0) =0$, $\lambda_{1}(1-t) = 1-t$ and
\begin{align} \label{eq3}
\sup_{s \in [0,1-t] } |f_{n}(\theta_{n}(s)) - f(\theta(\lambda_{1}(s)))|  \rightarrow 0, \hspace*{4mm} \text{as} \hspace*{2mm}  n\rightarrow \infty.
\end{align}
\noindent Define $\lambda_{2}(s) = \lambda_{1}(s)$, for $s \in [0,1-t]$ and $\lambda_{2}(s) = s$, for $s \in (1-t, 1]$. In particular, $\lambda_{2}$ is a strictly continuous function from $[0,1]$ onto itself. On the other hand, \eqref{eq28} implies that
\begin{align} \label{eq29}
\sup_{s \in [1-t,1]} | f_{n}(\theta_{n}(s))| \rightarrow 0, \hspace*{4mm} \text{as} \hspace*{2mm}  n\rightarrow \infty.
\end{align}
\noindent (Recall that $f(1) =0$). Therefore, it is not difficult to conclude from \eqref{eq3} and \eqref{eq29}  that 
\begin{align}
\sup_{s \in [0,1] } |f_{n}(\theta_{n}(s)) - f(\theta(\lambda_{2}(s)))|  \rightarrow 0, \hspace*{4mm} \text{as} \hspace*{2mm}  n\rightarrow \infty,
\end{align}
\noindent which implies \eqref{eq2}. 

Next we prove \ref{lemma3C1}. Note that $\ell(t-) = \inf  \{ s \in (0, 1]: f(\theta(s)) - f(t-)  <0   \}$. 

If $t=1$, then $\limsup_{n \rightarrow \infty} \ell_{n}(t_{n}) \leq 1 = \ell(t-)$. Suppose that $t \in [0,1)$. As $[0,1]\setminus J(f)$ is dense in $[0,1]$, for any $\delta \in (0, 1-t-\ell(t-)]$, there exists a continuity point $t^{\prime} \in [\ell(t-),\ell(t-) + \delta]$ of $f(\theta(\cdot))$ such that $f(\theta(t^{\prime})) - f(t-) < 0$. But then, since we have assumed that $f_{n}(t_{n}) \rightarrow f(t-)$, we conclude from \eqref{eq2} that $f_{n}(\theta_{n}(t^{\prime}))- f_{n}(t_{n}) \rightarrow f(\theta(t^{\prime})) - f(t-)$, as $n \rightarrow \infty$. Thus,  $f_{n}(\theta_{n}(t^{\prime}))- f_{n}(t_{n})  < 0$ for $n$ large enough. We deduce that $\limsup_{n \rightarrow \infty} \ell_{n}(t_{n}) \leq \ell(t-)$ (note that we also require $n$ to be large enough such that  $t^{\prime} > r_{n}$). 

If $t \in (0,1) \setminus J(f)$, then $\ell(t-) = 0$ by Lemma \ref{lemma1}, which implies \ref{lemma3C1}. Suppose that $t \in J(f)$ or $t =0,1$. Then, it follows that for any $\delta \in (0, \ell(t-))$, and for any $s \in (0, \ell(t-) - \delta]$, $f(\theta(s)) - f(t-) > 0$ and $f(\theta(s-)) - f(t-) > 0$. Hence, there exists $\eta > 0$ such that 
\begin{align}
d(f(\theta(s))- f(-t), (-\infty, 0]) > \eta \quad \text{and}  \quad d(f(\theta(s-))- f(t-), (-\infty, 0]) > \eta,
\end{align}
\noindent for any $s \in (0, \ell(t-) - \delta]$; here $d$ denotes the Euclidean distance in $\mathbb{R}$. (Indeed, this follows from Lemma \ref{lemma2} for $t \in J(f)$ and for $t=0,1$ by recalling that $f(0-)=f(0) =f(1-)=f(1) = 0$ and $f(s) > 0$ for every $s \in (0,1)$.) But then, by \eqref{eq2}, there exists $N \in \mathbb{N}$ such that
\begin{align}
d(f_{n}(\theta_{n}(s))- f_{n}(t_{n}), (-\infty, 0]) > \eta \quad \text{and}  \quad d(f_{n}(\theta_{n}(s-))- f_{n}(t_{n}), (-\infty, 0]) > \eta,
\end{align}
\noindent for any $s \in (0, \ell(t-) - \delta]$ and $n \geq N$. The above implies that $\ell_{n}(t_{n}) \geq (\ell(t-)-\delta) \vee r_{n}$ for $n \geq N$ and we deduce that $\liminf_{n \rightarrow \infty} \ell_{n}(t_{n}) \geq \ell(t-)-\delta$. Therefore, $\ell_{n}(t_{n}) \rightarrow \ell(t-)$, which concludes the proof of \ref{lemma3C1}.

Finally, the proof of \ref{lemma3C2} is similar, and the details are left to the reader. 
\end{proof}
\end{appendices}

\paragraph{Acknowledgements.}
The authors thank the anonymous reviewer for the careful revision of our manuscript. Their insightful comments greatly improved its overall presentation and helped clarify several obscure parts of an earlier version.


\providecommand{\bysame}{\leavevmode\hbox to3em{\hrulefill}\thinspace}
\providecommand{\MR}{\relax\ifhmode\unskip\space\fi MR }
\providecommand{\MRhref}[2]{%
  \href{http://www.ams.org/mathscinet-getitem?mr=#1}{#2}
}
\providecommand{\href}[2]{#2}

\end{document}